\documentclass[11pt,reqno]{amsart}
\usepackage[T1]{fontenc}
\usepackage[utf8]{inputenc}
\usepackage[english]{babel}
\usepackage{lmodern}
\usepackage{amsmath,amssymb,amsthm,mathtools}
\usepackage[a4paper,margin=30mm]{geometry}
\usepackage{microtype}
\usepackage{enumitem}
\usepackage{hyperref}
\numberwithin{equation}{section}
\newtheorem{theorem}{Theorem}[section]
\newtheorem{proposition}[theorem]{Proposition}
\newtheorem{lemma}[theorem]{Lemma}
\newtheorem{corollary}[theorem]{Corollary}
\theoremstyle{definition}
\newtheorem{definition}[theorem]{Definition}
\theoremstyle{remark}
\newtheorem{remark}[theorem]{Remark}
\newcommand{\R}{\mathbb R}
\newcommand{\C}{\mathbb C}
\newcommand{\D}{\mathcal D}
\newcommand{\Id}{\operatorname{Id}}
\newcommand{\dd}{\mathrm d}
\newcommand{\Lie}{\mathcal L}
\newcommand{\Ric}{\operatorname{Ric}}
\newcommand{\tr}{\operatorname{tr}}
\newcommand{\Rea}{\operatorname{Re}}
\newcommand{\Ima}{\operatorname{Im}}
\newcommand{\SU}{\mathrm{SU}}
\newcommand{\Span}{\operatorname{span}}
\newcommand{\HS}{\mathrm{HS}}

\newcommand{\doi}[1]{\href{https://doi.org/#1}{\texttt{\nolinkurl{#1}}}}
\DeclareMathOperator{\rank}{rank}

\title[Quasi-contact metric structures from Sasaki-Einstein metrics]{Quasi-contact metric structures from Sasaki-Einstein metrics}
\author[B. Cappelletti-Montano]{Beniamino Cappelletti-Montano}
 \address{Dipartimento di Matematica e Informatica, Universit\`a degli Studi di Cagliari, Via Ospedale 72, 09124 Cagliari, Italy}
 \email{b.cappellettimontano@unica.it}

\author[A. De Nicola]{Antonio De Nicola}
 \address{Dipartimento di Matematica, Universit\`a degli Studi di Salerno, Via Giovanni Paolo II 132, 84084 Fisciano, Italy}
 \email{antondenicola@gmail.com}

\thanks{BCM was partially supported by the project ProBiki, fundend by Fondazione di Sardegna}

\subjclass[2020]{Primary 53D15; Secondary 53C25, 53C26}
\keywords{Quasi-contact metric structure, quasi-K\"ahler cone, Sasaki-Einstein manifold, Killing vector field}
\date{}

\begin{document}
\begin{abstract}
We construct the first examples of quasi-contact metric structures that are not contact metric in every odd dimension greater than or equal to $5$. In dimension $5$, we construct a one-parameter family of such examples
with a Killing characteristic vector field by deforming
Sasaki--Einstein $\mathrm{SU}(2)$-structures. This yields compact, homogeneous examples on $S^5$. 
In arbitrary dimensions, we investigate quasi-contact metric structures with a Killing characteristic vector field and derive a  Ricci curvature criterion for the structure to be K-contact. 
\end{abstract}
\maketitle

\section{Introduction}

An almost contact metric manifold $(M^{2n+1},\varphi,\xi,\eta,g)$ is called
\emph{quasi-contact metric} if its associated almost Hermitian cone is
quasi-K\"ahler. More precisely, on $M\times\mathbb R$, with coordinate $t$,
one considers
\[
\widetilde g=e^{-2t}(g+\mathrm dt^2),\qquad
\widetilde JX=\varphi X-\eta(X)\partial_t,\qquad
\widetilde J\partial_t=\xi,
\]
and requires
\[
(\widetilde\nabla_U\widetilde J)V+
(\widetilde\nabla_{\widetilde JU}\widetilde J)\widetilde JV=0
\]
for all vector fields $U,V$ on the cone. Here $\widetilde\nabla$ is the
Levi-Civita connection of $\widetilde g$. The substitution $r=e^{-t}$
identifies this metric with the Riemannian cone metric
$\mathrm dr^2+r^2g$. The definition goes back to Tashiro's contact
$O^*$-structures \cite{Tashiro}, whereas its intrinsic formulation and its
relation to contact metric geometry were developed in \cite{KPS, PSS15}.

Every contact metric structure is quasi-contact metric since its cone is
almost K\"ahler, and every almost K\"ahler manifold is quasi-K\"ahler.
The converse implication has remained an unresolved question in the
literature. In particular, no quasi-contact metric example that is not
contact metric is provided in \cite{KPS,BJPS,MHO26}, and the existence of
such examples is explicitly described as open in the recent papers
\cite{MH24,MHO26}. 

We call a quasi-contact metric structure \emph{strict} if
it is not contact metric.  The absence of known strict examples leaves
open the possibility that the quasi-K\"ahler condition, although weaker
than the almost K\"ahler condition in general, might be too restrictive
on a cone to give a genuinely larger class. This naturally leads to the
question of whether every quasi-contact metric structure is necessarily
contact metric. In dimension three, the answer is affirmative: the cone
has complex dimension two, so the quasi-K\"ahler condition forces its
fundamental form to be closed.

The first purpose of this paper is to show that the answer to the above question is negative
in dimension $5$. A product construction then gives strict examples in
every higher odd dimension as well. We construct strict quasi-contact metric structures
on every $5$-manifold equipped with a Sasaki-Einstein
$\mathrm{SU}(2)$-structure. In particular, we obtain compact, complete
and simply connected examples of strict quasi-contact structures on $S^5$, whose   characteristic
vector fields are Killing. Thus, the distinction between quasi-contact
and contact metric structures persists even under this additional
metric condition. The $1$-form $\eta$ remains a contact form: this holds
for every quasi-contact metric structure \cite{PSS15}. What fails is its
compatibility condition $\mathrm d\eta=2 g(\cdot,\varphi\cdot)$ with the prescribed pair $(\varphi,g)$.

The construction is motivated by the description of $5$-dimensional
nearly Sasakian manifolds in terms of $\mathrm{SU}(2)$-structures due to
Cappelletti-Montano and Dileo \cite{CD}. Their work relates nearly
Sasakian and Sasaki–Einstein structures through deformations of the
underlying quaternionic structure in dimension $5$. Here we use the same quaternionic structure described in \cite{CD}, but we determine a different deformation by imposing the
quasi-K\"ahler condition on the cone. The essential operation is a
constant rotation of the horizontal endomorphisms, combined with
different constant changes of scale in the characteristic direction
and its orthogonal complement. 

Write the initial $SU(2)$-structure as
\[
(\eta_0,\omega_1,\omega_2,\omega_3;
 g_0,\xi_0,\varphi_1,\varphi_2,\varphi_3),\qquad
\omega_i(X,Y)=g_0(\varphi_iX,Y),
\]
where $\varphi_3$ is the Sasakian endomorphism, and the three
endomorphisms satisfy the quaternionic relations on $\ker\eta_0$ (see 
Section~\ref{sec:SE} for more details).

For an almost contact metric structure, we write
$h=\frac12\Lie_\xi\varphi$.
For an endomorphism $F$, the superscript $\ast$ denotes its
metric adjoint, defined by
\[
g(FX,Y)=g(X,F^\ast Y).
\]
For the structures below, $h_\mu=\frac12\Lie_{\xi_\mu}\varphi_\mu$,
and the adjoint is taken with respect to $g_\mu$.

Our main result is the following theorem.

\begin{theorem}\label{thm:main}
Let $M^5$ carry a Sasaki-Einstein $\mathrm{SU}(2)$-structure.
For $\mu\geq0$, put $\lambda=\sqrt{1+\mu^2}$ and define
\begin{align}
\eta_\mu&=\frac{3}{2\lambda}\eta_0,
&\xi_\mu&=\frac{2\lambda}{3}\xi_0,\label{eq:main-eta}\\
g_\mu&=\frac{3}{2\lambda^2}g_0+
\frac{3}{4\lambda^2}\eta_0\otimes\eta_0,
&\varphi_\mu&=\frac{\varphi_3+\mu\varphi_1}{\lambda}.
\label{eq:main-g}
\end{align}
Then $(\varphi_\mu,\xi_\mu,\eta_\mu,g_\mu)$ is quasi-contact metric
and $\xi_\mu$ is Killing. It is contact metric if and only if $\mu=0$;
at $\mu=0$ it is Sasakian. Moreover,
\begin{equation}\label{eq:main-h}
h_\mu=\mu\varphi_2,\qquad h_\mu^{\ast}=-h_\mu,\qquad
h_\mu^2=-\mu^2(\operatorname{Id}-\eta_\mu\otimes\xi_\mu),
\end{equation}
and its Levi-Civita connection satisfies
\begin{gather}\label{eq:main-derivative}
(\nabla_X\varphi_\mu)Y
=g_\mu(X+h_\mu X,Y)\xi_\mu
-\eta_\mu(Y)(X+h_\mu X),\\
R(X,\xi_\mu)\xi_\mu=(1+\mu^2)
\bigl\{X-\eta_\mu(X)\xi_\mu\bigr\}. \label{eq:main-Jacobi}
\end{gather}
If $g_0$ is complete, then $g_\mu$ is complete.
For $\mu>0$, the cone is quasi-K\"ahler but neither almost K\"ahler
nor nearly K\"ahler.
\end{theorem}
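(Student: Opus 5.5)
The plan is to reduce everything to the structure equations of the Sasaki--Einstein $\mathrm{SU}(2)$-structure. In the normalization of Section~\ref{sec:SE} we have
\[
\dd\eta_0=2\omega_3,\qquad \dd\omega_1=-3\eta_0\wedge\omega_2,\qquad \dd\omega_2=3\eta_0\wedge\omega_1 ,
\]
possibly up to signs fixed there. The horizontal endomorphisms are $g_0$-skew and satisfy the quaternionic relations on $\ker\eta_0$. The proof then proceeds in the following order.

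\emph{Step 1: the almost contact metric structure.} Since $\varphi_3,\varphi_1$ anticommute and are skew, $\varphi_\mu^2=-(\Id-\eta_0\otimes\xi_0)$ on $\ker\eta_0$. This gives $\varphi_\mu^2=-\Id+\eta_\mu\otimes\xi_\mu$ and $\eta_\mu(\xi_\mu)=1$. Compatibility with $g_\mu$ holds because $g_\mu$ is a constant multiple of $g_0$ on $\ker\eta_0$. On $\xi_0$ we find $g_\mu(\xi_0,\xi_0)=\tfrac{9}{4\lambda^2}$, which matches $\eta_\mu\otimes\eta_\mu$.

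\emph{Step 2: $h_\mu$ and the Killing property.} Next we compute $\Lie_{\xi_0}\varphi_i$. From the Sasakian condition, $\Lie_{\xi_0}\varphi_3=0$. From the structure equations, $\Lie_{\xi_0}\omega_1=\xi_0\lrcorner\dd\omega_1=-3\omega_2$, and $\Lie_{\xi_0}g_0=0$, so $\Lie_{\xi_0}\varphi_1=-3\varphi_2$, up to the sign convention. Hence
\[
h_\mu=\tfrac12\cdot\tfrac{2\lambda}{3}\cdot\tfrac{\mu}{\lambda}\,(3\varphi_2)=\mu\varphi_2 ,
\]
with the sign adjusted to the convention. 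Skewness of $h_\mu$ and the identity $h_\mu^2=-\mu^2(\Id-\eta_\mu\otimes\xi_\mu)$ are then immediate. The field $\xi_\mu$ is Killing for $g_\mu$ because $g_\mu$ is built from $g_0$ and $\eta_0$, both of which are invariant under the Killing Reeb field $\xi_0$.

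\emph{Step 3: the non-contact statement.} For this we compare $\dd\eta_\mu=\tfrac{3}{\lambda}\omega_3$ with
\[
2g_\mu(\cdot,\varphi_\mu\cdot)=-\tfrac{3}{\lambda^3}(\omega_3+\mu\omega_1).
\]
After matching sign conventions, the two forms are equal if and only if $\lambda^2=1$ and $\mu=0$. At $\mu=0$ the structure is a $\mathcal D$-homothetic deformation of a Sasakian structure, hence Sasakian.

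\emph{Step 4: the derivative formula.} This is the main computational obstacle. I would compute $\nabla^{\mu}$ from $\nabla^0$ via the Koszul formula. For $g_\mu=a g_0+b\,\eta_0\otimes\eta_0$ with Killing $\xi_0$ and $\nabla^0\xi_0=-\varphi_3$, the difference tensor is
\[
\nabla^\mu_XY-\nabla^0_XY=-\tfrac{b}{a}\bigl(\eta_0(X)\varphi_3Y+\eta_0(Y)\varphi_3X\bigr)\quad\text{(up to sign)}.
\]
We also need $\nabla^0\varphi_1$ and $\nabla^0\varphi_2$, which for a Sasaki--Einstein $\mathrm{SU}(2)$-structure involve only $\eta_0$-terms with $\varphi_2,\varphi_1$ and the vertical/horizontal mixing. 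I would extract these from the structure equations, or quote them from \cite{CD}. Substituting and collecting terms should give \eqref{eq:main-derivative}; the scale $\tfrac{3}{2\lambda^2}$ is exactly what makes the coefficients combine into $X+h_\mu X$.

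\emph{Step 5: consequences.} From \eqref{eq:main-derivative} we get $\nabla\xi_\mu=-\varphi_\mu-\varphi_\mu h_\mu$. Differentiating once more and using $\nabla h_\mu$ along $\xi_\mu$ yields \eqref{eq:main-Jacobi}; alternatively, one can use directly that $R(X,\xi)\xi=-\nabla_X\nabla\xi\cdot\xi$-type formulas for Killing fields, which give $(-\varphi_\mu-\varphi_\mu h_\mu)^2$ restricted to the horizontal part, namely $(1+\mu^2)$ times the identity.

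For quasi-K\"ahlerity, I would invoke the intrinsic characterization of \cite{KPS,PSS15}. That characterization should be exactly an identity of the form \eqref{eq:main-derivative} with $h$ replaced by a suitable operator, which certainly holds here. Alternatively, one can verify $(\widetilde\nabla_U\widetilde J)V+(\widetilde\nabla_{\widetilde JU}\widetilde J)\widetilde JV=0$ on the cone using \eqref{eq:main-derivative} and the standard cone formulas, checking the cases $U,V\in\{\partial_t,\text{tangent}\}$ separately. The term $h_\mu X$ is anti-invariant under $\varphi_\mu$ and therefore cancels in the sum.

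The remaining assertions are quick. The cone is not almost K\"ahler since the structure is not contact metric. It is not nearly K\"ahler because that would force the $X$-symmetric part of $(\nabla\varphi_\mu)$ to vanish, which the $h_\mu$-term violates for $\mu>0$. Completeness holds because $g_\mu$ is uniformly equivalent to $g_0$.
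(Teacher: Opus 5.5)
Your plan follows the paper's covariant route in Section~\ref{sec:covariant}. The ingredients match:
the difference tensor $-k\{\eta_0(X)\varphi_3Y+\eta_0(Y)\varphi_3X\}$ and the formulas \eqref{eq:SE-phi1}--\eqref{eq:SE-phi2};
then $\nabla\xi_\mu=-\varphi_\mu-\varphi_\mu h_\mu$;
the unit Killing identity $R(X,\xi)\xi=-S^2X$ with $S^2=-(1+\mu^2)(\Id-\eta_\mu\otimes\xi_\mu)$ (note the minus sign you dropped);
a direct check of \eqref{eq:QK} on the cone;
and uniform equivalence of $g_\mu$ with $g_0$.
The computation you leave open in Step~4 closes as follows. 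Put $a=\frac{3}{2\lambda^2}$ and $b^2=g_\mu(\xi_0,\xi_0)=\frac{9}{4\lambda^2}$. Then $k=(b^2-a)/a=\frac12$, which kills the $\eta_0(X)\varphi_2Y$ term in $\nabla\varphi_1$, and $b/a=\lambda$ normalizes the remaining coefficients.

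The signs, however, are not a matter of convention. The statement fixes $\omega_3=g_0(\varphi_3\cdot,\cdot)$ with $\varphi_3$ Sasakian, which forces $\dd\eta_0=2g_0(\cdot,\varphi_3\cdot)=-2\omega_3$. The correct system is \eqref{eq:SE-d}, and all three of its signs are opposite to the ones you wrote. With \eqref{eq:SE-d}, $\Lie_{\xi_0}\varphi_1=3\varphi_2$ gives $h_\mu=\mu\varphi_2$ exactly, and $\dd\eta_\mu-2\Phi_\mu=\frac{3\mu}{\lambda^3}(\omega_1-\mu\omega_3)$, which vanishes exactly when $\mu=0$. With your signs:
\begin{enumerate}[label=\textup{(\alph*)},leftmargin=2.6em]
\item you would get $h_\mu=-\mu\varphi_2$;
\item already at $\mu=0$ you would get $\dd\eta_\mu=-2\Phi_\mu$, so the $\mu=0$ structure would not even be contact metric;
\item for every $\mu$, $\dd\eta_\mu-2\Phi_\mu$ would have a nonzero $\varphi_\mu$-invariant part, violating \eqref{eq:hor2}, so the structure would not be quasi-contact.
\end{enumerate}
So ``adjusting the sign'' is a substantive step, not a cosmetic one.

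Your argument that the cone is not nearly K\"ahler is wrong as stated. A nearly K\"ahler cone does not force the symmetric part of $\nabla\varphi_\mu$ to vanish: every Sasakian structure has $(\nabla_X\varphi)X=g(X,X)\xi-\eta(X)X\neq0$, yet its cone is K\"ahler. The condition concerns $\widetilde\nabla\widetilde J$ instead. You need to substitute \eqref{eq:main-derivative} and $\nabla\xi_\mu$ into \eqref{eq:cone-nabla1}--\eqref{eq:cone-nabla2} anyway for the quasi-K\"ahler check, and doing so gives $(\widetilde\nabla_X\widetilde J)Y=g_\mu(h_\mu X,Y)\xi_\mu-\eta_\mu(Y)h_\mu X+g_\mu(\varphi_\mu h_\mu X,Y)T$. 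Since $h_\mu$ and $\varphi_\mu h_\mu$ are skew, $(\widetilde\nabla_X\widetilde J)X=-\eta_\mu(X)h_\mu X$. For $\mu>0$ this is nonzero at $X=\xi_\mu+Y$ with $0\neq Y\in\ker\eta_\mu$.

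The paper argues differently, via Lemma~\ref{lem:NK-cone}. On a nearly K\"ahler cone, $\widetilde\nabla\widetilde\Phi$ is a $3$-form with $\widetilde\nabla_T\widetilde\Phi=0$, so $\iota_T\dd\widetilde\Phi=0$, and \eqref{eq:cone-dF} then forces $\dd\eta=2\Phi$, contradicting your Step~3.

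Finally, the intrinsic characterization you hope to quote is the symmetrized identity \eqref{eq:general-intrinsic-QC}, not \eqref{eq:main-derivative} itself. Your direct cone check is the reliable option, and it is exactly the paper's second verification.
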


The global $\mathrm{SU}(2)$-structure exists, in particular, on every simply
connected Sasaki-Einstein $5$-manifold. For $S^5$, we construct all
the initial tensors explicitly from the standard complex volume form
on $\mathbb C^3$. This gives the following consequence.

\begin{corollary}\label{cor:sphere-intro}
The sphere $S^5$ admits strict quasi-contact metric structures with
Killing characteristic vector field. Their metrics are complete and
all their structure tensors are invariant under the transitive
$\mathrm{SU}(3)$-action. Dimension five is the smallest possible
dimension for a strict quasi-contact metric structure.
\end{corollary}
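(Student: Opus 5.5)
The plan is to produce an explicit global Sasaki--Einstein $\mathrm{SU}(2)$-structure on the round $S^5\subset\C^3$ built only from $\mathrm{SU}(3)$-invariant data. Theorem~\ref{thm:main} with $\mu>0$ then gives the strict examples, and the low-dimensional claim is handled separately.

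First, let $r$ be the radial coordinate on $\C^3$, $J$ the standard complex structure, $\omega_{\C^3}$ the standard Kähler form and $\Omega=\dd z_1\wedge\dd z_2\wedge\dd z_3$. On $S^5$ I set
\begin{itemize}[label={}]
\item $g_0$ equal to the round metric and $\xi_0=J(r\partial_r)$ restricted to $S^5$ (the Hopf field), with $\eta_0=g_0(\xi_0,\cdot)$;
\item $\omega_3=\tfrac12\dd\eta_0$, which up to the normalization convention of Section~\ref{sec:SE} is the pullback of $\omega_{\C^3}$;
\item $\omega_1+i\omega_2=\iota_{r\partial_r}\Omega$ restricted to $S^5$.
\end{itemize}
The Sasaki--Einstein $\mathrm{SU}(2)$-structure is then
$(\eta_0,\omega_1,\omega_2,\omega_3;g_0,\xi_0,\varphi_1,\varphi_2,\varphi_3)$, with $\varphi_i$ defined from $\omega_i$ via $g_0$. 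This is the standard cone correspondence: the flat cone $\C^3\setminus\{0\}$ is Calabi--Yau with Kähler form $\omega_{\C^3}=r\,\dd r\wedge\eta_0+r^2\omega_3$ and holomorphic volume form $\Omega=r^2(\dd r+ir\eta_0)\wedge(\omega_1+i\omega_2)$.

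I would verify the defining relations of Section~\ref{sec:SE} by expanding $\omega_{\C^3}$ and $\Omega$ in this form. The closedness of $\omega_{\C^3}$ and $\Omega$ gives $\dd\eta_0=2\omega_3$ and $\dd(\omega_1+i\omega_2)=3i\,\eta_0\wedge(\omega_1+i\omega_2)$. The algebraic identity $\Omega\wedge\bar\Omega=\text{const}\cdot\omega_{\C^3}^3$ gives the quaternionic relations on $\ker\eta_0$. I expect this bookkeeping of normalizations (the factor $3$, signs, and the paper's convention $\omega_i(X,Y)=g_0(\varphi_iX,Y)$) to be the main obstacle, though it is routine.

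Next comes invariance. The group $\mathrm{SU}(3)$ preserves $r$, $J$, $\omega_{\C^3}$ and $\Omega$, since $\det=1$. Hence it preserves every tensor above, and so also the tensors $(\varphi_\mu,\xi_\mu,\eta_\mu,g_\mu)$, which are constant-coefficient combinations of them by \eqref{eq:main-eta}--\eqref{eq:main-g}. The action of $\mathrm{SU}(3)$ on $S^5$ is transitive, since $\mathrm{SU}(3)/\mathrm{SU}(2)\cong S^5$. Applying Theorem~\ref{thm:main} with any $\mu>0$ gives quasi-contact metric structures that are not contact metric and have Killing $\xi_\mu$. Completeness follows from compactness of $S^5$, or from the completeness clause of the theorem.

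Finally, minimality of dimension five. In dimension $1$ there is no almost contact metric structure with nontrivial contact distribution to speak of; to be careful, one notes that the almost Hermitian cone has real dimension $2$ and its fundamental form is trivially closed. In dimension $3$ the cone has real dimension $4$. By the Gray--Hervella classification, in real dimension $4$ the classes $\mathcal W_1$ and $\mathcal W_3$ vanish, so quasi-Kähler ($\mathcal W_1\oplus\mathcal W_2$) coincides with almost Kähler ($\mathcal W_2$). Equivalently, $\dd\widetilde\Omega$ has no $(3,0)$-part and the quasi-Kähler condition kills its $(2,1)$-part. An almost Kähler cone means the structure is contact metric, as recalled in the Introduction, so no strict example exists below dimension $5$.
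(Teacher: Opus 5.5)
Your route is the paper's own. You build an $\SU(3)$-invariant Sasaki--Einstein $\SU(2)$-structure on $S^5$ from $\Omega=\dd z_1\wedge\dd z_2\wedge\dd z_3$ (Section~\ref{sec:sphere}), apply Theorem~\ref{thm:main} with $\mu>0$, and use Corollary~\ref{cor:dim3} for dimension $3$; your Gray--Hervella remark is equivalent to Lemma~\ref{lem:QK-type}. The invariance, transitivity, completeness and low-dimensional arguments are fine. \textbf{The gap is the normalization you commit to.} You call it routine bookkeeping, but the signs you wrote down are the reverse of those in \eqref{eq:SE-d}.

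With $\xi_0=J(r\partial_r)$ one has $\eta_0=\sum_j(x_j\dd y_j-y_j\dd x_j)|_{S^5}$. So your $\omega_3=\frac12\dd\eta_0$ equals $+\omega_{\C^3}|_{S^5}$, and the endomorphism with $g_0(\varphi_3X,Y)=\omega_3(X,Y)$ is $\varphi_3X=IX+\eta_0(X)N$. But \eqref{eq:SE-d} demands $\dd\eta_0=-2\omega_3$, because the fundamental form of $(\varphi_3,\xi_0,\eta_0,g_0)$ is $-\omega_3$ (the remark after \eqref{eq:quaternion}). The paper's $\varphi_3$ is $-IX-\eta_0(X)N$, whose cone structure is $-I$. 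Yours has cone structure $I$ on $\D$ and $-I$ on $\Span\{N,\xi_0\}$, so it is not Sasakian. Likewise $\omega_1+i\omega_2=\iota_N\Omega$ gives $\dd\omega_1=-3\eta_0\wedge\omega_2$ and $\dd\omega_2=3\eta_0\wedge\omega_1$, again reversed; the paper takes $\omega_2=-\Ima\alpha$.

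This is not cosmetic. Inserting your tensors into \eqref{eq:main-eta}--\eqref{eq:main-g} gives, for horizontal $X$,
\[
\dd\eta_\mu(X,\varphi_\mu X)=+2g_\mu(X,X).
\]
Corollary~\ref{cor:eta-contact} instead forces $-2g_\mu(X,X)$ on any quasi-contact structure. Equivalently, $B(X,\varphi_\mu X)=4g_\mu(X,X)\neq0$, which contradicts \eqref{eq:hor2}. So the structures you would actually produce are not quasi-contact for any $\mu\geq0$.

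The repair is local. Either take $\varphi_3X=-IX-\eta_0(X)N$ (that is, $\omega_3=-\frac12\dd\eta_0$) and $\omega_1-i\omega_2=\iota_N\Omega$, as the paper does. Or keep your forms and reverse the Reeb field, $(\xi_0,\eta_0)\mapsto(-\xi_0,-\eta_0)$. Either choice remains $\SU(3)$-invariant, leaves the quaternionic relations intact, and satisfies \eqref{eq:SE-d}. After that, the rest of your argument goes through unchanged.
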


We verify the quasi-contact condition in two independent ways: by
computing the type of $\mathrm d\widetilde\Phi$ on the cone and by
calculating the covariant derivative of $\widetilde J$. We also
determine exactly which constant rotations and $2$-scale deformations
of a fixed Sasaki-Einstein structure are quasi-contact. This explains how the coefficients in
\eqref{eq:main-eta}--\eqref{eq:main-g} arise from the
quasi-contact condition.

The Killing property deserves particular attention because
Theorem~A of \cite{BJPS} asserts that it forces a quasi-contact metric
structure to be K-contact. The examples above disprove that assertion.
We identify the change-of-basis error in its proof, while retaining the
valid preliminary identities. In particular, on a quasi-contact metric
manifold, contact metric compatibility is equivalent to
$h^{\ast}=h$, whereas the Killing condition on $\xi$ is equivalent to
$h^{\ast}=-h$. These two requirements coincide only when $h=0$.
The following result gives a curvature criterion for that vanishing.

\begin{theorem}\label{thm:curvature-intro}
Let $(M^{2n+1},\varphi,\xi,\eta,g)$ be quasi-contact metric and suppose
that $\xi$ is Killing. Then
\begin{equation}\label{eq:intro-Jacobi}
R(X,\xi)\xi=X-\eta(X)\xi-h^2X,
\qquad
\operatorname{Ric}(\xi,\xi)=2n+\|h\|_{\mathrm{HS}}^2.
\end{equation}
Consequently, the structure is K-contact if and only if
$\operatorname{Ric}(\xi,\xi)=2n$ holds everywhere.
Every eigenspace of $-h^2|_{\ker\eta}$ corresponding to a positive
eigenvalue has real dimension divisible by $4$.
\end{theorem}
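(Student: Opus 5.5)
The plan is to derive the intrinsic formula for $\nabla\varphi$ on a quasi-contact metric manifold with Killing $\xi$, namely the analogue of \eqref{eq:main-derivative}:
\[
(\nabla_X\varphi)Y=g(X+hX,Y)\xi-\eta(Y)(X+hX).
\]
We would obtain it from the quasi-K\"ahler condition on the cone in the following order.

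\emph{Step 1 (the $\nabla\xi$ formula).} Expanding $(\widetilde\nabla_U\widetilde J)V+(\widetilde\nabla_{\widetilde JU}\widetilde J)\widetilde JV=0$ with $U=X$, $V=\partial_t$, and using the standard warped-product formulas for $\widetilde\nabla$, gives the known identity (from \cite{KPS,PSS15})
\[
\nabla_X\xi=-\varphi X-\varphi hX .
\]
The Killing assumption says $\nabla\xi$ is skew. Since $(\varphi h)^\ast=h^\ast\varphi^\ast$, skewness of $\varphi h$ amounts to the preliminary identity (valid from \cite{BJPS}) that $h$ anticommutes with $\varphi$ and that Killing is equivalent to $h^\ast=-h$. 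We record $h\xi=0$, $h\varphi=-\varphi h$ and $h^\ast=-h$.

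\emph{Step 2 (the Jacobi operator).} For a Killing field we have $\nabla^2_{X,Y}\xi=R(X,\xi)Y$, or equivalently $R(X,\xi)\xi=-\nabla_{\nabla_X\xi}\xi$ up to sign convention, once we use $\nabla_\xi\xi=0$. Since $\xi$ is Killing and $\nabla_\xi\xi=-\varphi\xi-\varphi h\xi=0$, it suffices to compute
\[
R(X,\xi)\xi=-\nabla_{\nabla_X\xi}\xi-\nabla_\xi\nabla_X\xi+\nabla_{\nabla_\xi X}\xi+\cdots.
\]
The cleanest route is the Killing identity $R(X,\xi)\xi=-A^2X$, where $A=\nabla\xi$ and $AX=-\varphi(\Id+h)X$. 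Using $h\varphi=-\varphi h$, $\varphi^2=-\Id+\eta\otimes\xi$ and $h\xi=0$,
\[
A^2X=\varphi(\Id+h)\varphi(\Id+h)X=\varphi^2(\Id-h)(\Id+h)X=-(\Id-h^2)(X-\eta(X)\xi),
\]
which gives $R(X,\xi)\xi=X-\eta(X)\xi-h^2X$. Tracing yields $\Ric(\xi,\xi)=2n-\tr h^2=2n+\tr(h^\ast h)=2n+\|h\|_{\HS}^2$, because $h^\ast=-h$. The criterion follows at once: $\Ric(\xi,\xi)=2n$ iff $h=0$, and by Step 1 $h=0$ means $\nabla\xi=-\varphi$, i.e.\ K-contact (using that $h^\ast=h$ characterises contact metric, as stated in the introduction).

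\emph{Step 3 (divisibility by 4).} The operator $-h^2=h^\ast h$ is symmetric and positive semidefinite on $\ker\eta$. Take an eigenspace $E_c$ with $c>0$. Then $\varphi$ and $h$ both commute with $h^2$: $\varphi$ does because $h^2\varphi=\varphi h^2$ by anticommutation. Hence both preserve $E_c$. On $E_c$, $J_1=\varphi$ and $J_2=h/\sqrt c$ satisfy $J_1^2=J_2^2=-\Id$ and anticommute, so $J_3=J_1J_2$ completes a quaternionic structure, and $E_c$ is a left $\mathbb H$-module, of real dimension divisible by $4$.

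The main obstacle is Step 1: getting the correct sign and form of $\nabla\xi$ (and $h\varphi=-\varphi h$) from the cone condition without reusing the flawed change of basis in \cite{BJPS}. We would verify it directly by computing $\widetilde\nabla$ on $M\times\R$ and checking the result against the examples of Theorem~\ref{thm:main}, where \eqref{eq:main-h} and \eqref{eq:main-Jacobi} must be consistent with it.
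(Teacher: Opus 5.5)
Steps 1--3 are correct and follow essentially the paper's proof. The shared ingredients are: $\nabla\xi=-\varphi(\Id+h)$ with $h^\ast=-h$; the unit-Killing identity $R(X,\xi)\xi=-(\nabla\xi)^2X$; the trace; and the quaternionic structure $(\varphi,h/\sqrt c)$ on each eigenspace. You obtain the Killing identity from $\nabla^2_{X,Y}\xi=R(X,\xi)Y$, whereas the paper obtains it from $\Lie_\xi\nabla\xi=0$.

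You should, however, drop your announced plan of deriving $(\nabla_X\varphi)Y=g(X+hX,Y)\xi-\eta(Y)(X+hX)$ in general. Your steps never use it, and it is false in dimension $\ge 7$. For horizontal arguments it forces $\dd\Phi|_{\ker\eta}=0$, which fails for the Killing join examples of Corollary~\ref{cor:all-dimensions}: there $\dd\Phi_L(Z,X,Y)=-2\mu\sin\theta\cos\theta\,\gamma(X,Y)$ for $X,Y\in\D_\mu$.
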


Here $\|h\|_{\mathrm{HS}}^2=\sum_j g(he_j,he_j)$ for an orthonormal
frame $(e_j)$ of $TM$. In particular, every plane containing $\xi$
has sectional curvature at least $1$. For the family in
Theorem~\ref{thm:main}, these sectional curvatures all equal
$1+\mu^2$.

The role of dimension five recalls a rigidity theorem in nearly
Sasakian geometry. Following the $5$-dimensional constructions in
\cite{CD}, De Nicola, Dileo and Yudin proved that every nearly Sasakian
manifold of dimension greater than $5$ is Sasakian \cite{DNDY}.
This parallel is useful, but it does not extend to 
quasi-contact geometry. Indeed, the product of two quasi-K\"ahler
cones is again quasi-K\"ahler. Taking the product of the cone over a
strict quasi-contact manifold $M^5$  with the flat K\"ahler cone over the unit
sphere $S^{2q-1}$, $q\geq1$, and writing
$r_1=\rho\cos\theta$, $r_2=\rho\sin\theta$, gives
\[
\mathrm d\rho^2+\rho^2
\bigl(\mathrm d\theta^2+\cos^2\theta\,g_\mu+
\sin^2\theta\,g_{S^{2q-1}}\bigr).
\]
The product almost complex structure is invariant under radial
dilations and therefore induces an almost contact metric structure
on 
$M^5\times S^{2q-1}\times(0,\pi/2)$.
This structure is quasi-contact and remains strict since the
exterior derivative of the product fundamental form is the nonzero
pullback of that of the first factor. Hence, strict examples also
exist in every odd dimension greater than $5$. Their characteristic
fields are again Killing, but their characteristic Jacobi operators have
two distinct eigenvalues. Section~\ref{sec:joins} gives all the induced
tensors and develops the construction in detail. 

The resulting manifolds are geodesically incomplete, since the
angular geodesics reach the excluded endpoints of the
$\theta$-interval in finite time.
The analogy with nearly Sasakian geometry, nevertheless, suggests a
question in the complete setting:
\begin{quote}
\emph{Is every complete quasi-contact metric manifold of dimension
greater than $5$ necessarily contact metric?}
\end{quote}
The above construction does not answer this question.
Indeed, by Proposition~\ref{prop:join-blowup}, the sectional curvature
becomes unbounded at a finite-distance end, excluding the possibility of any smooth
Riemannian extension across that end.
In dimensions greater than or equal to $5$, there is a partial result 
in \cite{MH24} under the additional assumption that the Riemannian curvature tensor satisfies a nullity condition.


\section{Cone conventions and the quasi-contact condition}\label{sec:cones}


An almost contact metric structure on $M^{2n+1}$ consists of  $(\varphi,\xi,\eta,g)$ satisfying
\begin{gather}
\varphi^2=-\Id+\eta\otimes\xi,\qquad
\eta(\xi)=1,\qquad \varphi\xi=0,\qquad \eta\circ\varphi=0,\label{eq:ac-algebra}\\
g(\varphi X,\varphi Y)=g(X,Y)-\eta(X)\eta(Y),
\qquad g(\xi,X)=\eta(X).
\label{eq:ac-metric}
\end{gather}
We put
$\D=\ker\eta$, $P=\Id-\eta\otimes\xi$ and
$\Phi(X,Y)=g(X,\varphi Y)$.
Thus $P$ is the orthogonal projection onto $\D$.

For an endomorphism $F$ of $TM$, the symbol $F^{\ast}$ denotes its pointwise
adjoint with respect to the metric under consideration:
\[
g(FX,Y)=g(X,F^{\ast}Y).
\]
On $\widetilde M=M\times\R$, with coordinate $t$ and $T=\partial_t$, define
\begin{equation}\label{eq:cone}
\widetilde g=e^{-2t}(g+\dd t^2),\qquad
\widetilde JX=\varphi X-\eta(X)T,\qquad
\widetilde JT=\xi.
\end{equation}
With $r=e^{-t}$, this is the Riemannian cone metric $\dd r^2+r^2g$ on $M\times\R_{>0}$. Its fundamental $2$-form is
\begin{equation}\label{eq:cone-F}
\widetilde\Phi=e^{-2t}(\Phi+\eta\wedge\dd t),
\end{equation}
so that
\begin{equation}\label{eq:cone-dF}
\dd\widetilde\Phi=e^{-2t}
\{\dd\Phi+\dd t\wedge(\dd\eta-2\Phi)\}.
\end{equation}
It follows that $\dd\widetilde\Phi=0$ if and only if $\dd\eta=2\Phi$: this equation also implies $\dd\Phi=0$ by applying $\dd$. Throughout this paper, we use the Spivak/Lee convention for the exterior derivative $\mathrm d$ and the wedge product. 

\begin{definition}
We call $(\varphi,\xi,\eta,g)$ \emph{contact metric} if $\dd\eta=2\Phi$, and \emph{quasi-contact metric} if the cone \eqref{eq:cone} is quasi-K\"ahler, that is,
\begin{equation}\label{eq:QK}
(\widetilde\nabla_U\widetilde J)V+
(\widetilde\nabla_{\widetilde JU}\widetilde J)\widetilde JV=0
\end{equation}
for all vector fields $U,V$ on $\widetilde M$.
A contact metric structure is \emph{K-contact} if $\xi$ is Killing, and \emph{Sasakian} if
\begin{equation}\label{eq:Sasakian}
(\nabla_X\varphi)Y=g(X,Y)\xi-\eta(Y)X.
\end{equation}
\end{definition}

In \cite{KPS,BJPS}, the contact metric equation is written $d\eta=\Phi$, with the differential convention
$d\eta(X,Y)=\tfrac12\left(X(\eta(Y))-Y(\eta(X))-\eta([X,Y])\right)$.
Our $\dd\eta$ is twice this $2$-form. We use the usual exterior derivative uniformly in every degree.

\medskip
We use the type decomposition of complexified forms on an almost complex manifold without assuming integrability. A complex $1$-form $\zeta$ has type $(1,0)$ when $\zeta(JU)=i\zeta(U)$.

\begin{lemma}\label{lem:QK-type}
Let $(N,J,G)$ be almost Hermitian, with $F(U,V)=G(U,JV)$. Then
\[
(\nabla_UJ)V+(\nabla_{JU}J)JV=0
\]
if and only if $\dd F$ has type $(3,0)+(0,3)$.
\end{lemma}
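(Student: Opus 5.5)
I will prove the lemma by translating both conditions into the decomposition of $\nabla F$, equivalently of $\nabla J$, into types, using the standard identities of almost Hermitian geometry.

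\textbf{Step 1: algebraic properties of $\nabla F$.} Differentiating $J^2=-\Id$ and the compatibility of $G$ with $J$ gives
\[
(\nabla_UJ)J=-J(\nabla_UJ)
\]
and shows that $(\nabla_UF)(V,W)=G(V,(\nabla_UJ)W)$ is skew in $V,W$. Consequently
\[
(\nabla_UF)(JV,JW)=-(\nabla_UF)(V,W).
\]
In other words, for each $U$ the $2$-form $\nabla_UF$ has type $(2,0)+(0,2)$.

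\textbf{Step 2: rewrite the quasi-K\"ahler condition.} Set $Q(U,V,W)=(\nabla_UF)(V,W)+(\nabla_{JU}F)(JV,W)$. Pairing $(\nabla_UJ)V+(\nabla_{JU}J)JV$ with $W$ shows that the quasi-K\"ahler condition is equivalent to $Q\equiv0$, that is,
\[
(\nabla_{JU}F)(JV,W)=-(\nabla_UF)(V,W).
\]
Now complexify. By Step 1, each $\nabla_UF$ pairs only two $(1,0)$ vectors or two $(0,1)$ vectors. Take $V,W$ of type $(1,0)$. Then $Q=0$ reads $(\nabla_{U+iJU}F)(V,W)=0$ up to a constant. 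Since $U+iJU$ is of type $(0,1)$ (or $(1,0)$, depending on convention), the condition says precisely this: the component of the $3$-tensor $\nabla F$ with one $(0,1)$ slot (the derivative direction) and two $(1,0)$ slots vanishes, together with its conjugate. Hence quasi-K\"ahler is equivalent to $\nabla F$ lying in the components of types $(3,0)+(0,3)$, viewed as a tensor in $T^*\otimes\Lambda^2$.

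\textbf{Step 3: pass to $\dd F$.} Use $\dd F(U,V,W)=\mathfrak S_{U,V,W}(\nabla_UF)(V,W)$ (cyclic sum; constant factors are irrelevant in the Spivak convention). If $\nabla F$ has only $(3,0)+(0,3)$ components, the antisymmetrization also does, giving one direction. For the converse, assume $\dd F$ has no $(2,1)+(1,2)$ part. By Step 1, the $(2,1)$-part of $\dd F$ on $(0,1),(1,0),(1,0)$ slots equals $(\nabla_{\bar Z}F)(Z_1,Z_2)$, because the other cyclic terms $(\nabla_{Z_1}F)(Z_2,\bar Z)$ pair a $(1,0)$ vector with a $(0,1)$ vector and vanish. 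So vanishing of $(\dd F)^{2,1}$ forces $(\nabla_{\bar Z}F)(Z_1,Z_2)=0$, which is $Q=0$ by Step 2.

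This last observation is the crux I expect to need the most care. It ensures that the mixed part of $\nabla F$ is recovered from $\dd F$ without any symmetrization loss. It relies entirely on the type $(2,0)+(0,2)$ property from Step 1, so I would check the sign and type conventions there carefully, in particular the convention $\zeta(JU)=i\zeta(U)$ fixed in the paper.
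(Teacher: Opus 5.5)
Your proof is correct and follows essentially the same route as the paper: both use the anti-invariance $(\nabla_UF)(JV,JW)=-(\nabla_UF)(V,W)$, and both rewrite the quasi-K\"ahler condition as $(\nabla_UF)(V,W)+(\nabla_{JU}F)(JV,W)=0$, which removes the components whose derivative slot has type opposite to the form slots; for the converse, both observe that $\dd F(\overline Z,Z_1,Z_2)=(\nabla_{\overline Z}F)(Z_1,Z_2)$, because the other two cyclic terms have form slots of mixed type. One small point in Step 2: your hedge is unnecessary, because once you use $JV=iV$ for the form slots, $J(U+iJU)=-i(U+iJU)$ forces $U+iJU\in T^{0,1}N$, so no choice of convention enters there.
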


\begin{proof}
Set $\mathcal A(U,V,W)=(\nabla_UF)(V,W)$ and extend this tensor complex linearly. It is skew-symmetric in its last two arguments. Differentiating $J^2=-\Id$ gives
\[
\mathcal A(U,JV,JW)=-\mathcal A(U,V,W).
\]
Therefore $\mathcal A$ vanishes whenever its last two arguments have opposite types. The quasi-K\"ahler equation is equivalent to
\[
\mathcal A(U,V,W)+\mathcal A(JU,JV,W)=0.
\]
After complexification, this equation eliminates the components in which the first two arguments have opposite types. Consequently, only the components with all three arguments of the same type can remain, and their cyclic alternation $\dd F$ has type $(3,0)+(0,3)$.

Conversely, take $Z_1,Z_2\in T^{1,0}N$ and $\overline Z_3\in T^{0,1}N$. Since the mixed components in the last two arguments vanish,
\[
\dd F(\overline Z_3,Z_1,Z_2)
=\mathcal A(\overline Z_3,Z_1,Z_2).
\]
The absence of a $(2,1)$-component of $\dd F$, together with its conjugate statement, therefore eliminates every component of $\mathcal A$ except those with all arguments of the same type. The quasi-K\"ahler equation follows.
\end{proof}

This is the usual characterization of the quasi-K\"ahler class in almost Hermitian geometry; see \cite{GH}. We have included the proof because it makes the cone verification independent of an intrinsic characterization of quasi-contact structures.

\begin{corollary}\label{cor:dim3}
Every $3$-dimensional quasi-contact metric structure is contact metric.
\end{corollary}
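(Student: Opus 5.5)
The plan is to apply Lemma~\ref{lem:QK-type} to the cone \eqref{eq:cone} over a $3$-dimensional quasi-contact metric manifold $(M^3,\varphi,\xi,\eta,g)$. The cone $\widetilde M=M\times\R$ has real dimension $4$, so it is an almost Hermitian manifold of complex dimension $2$. On such a manifold $T^{1,0}\widetilde M$ has complex rank $2$. Hence $\Lambda^{3,0}$ and $\Lambda^{0,3}$ vanish identically, because any complex $3$-form of type $(3,0)$ is an alternating trilinear form on a $2$-dimensional space. By the lemma, the quasi-K\"ahler condition \eqref{eq:QK} says that $\dd\widetilde\Phi$ has type $(3,0)+(0,3)$. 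Therefore it forces $\dd\widetilde\Phi=0$.

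Next I use \eqref{eq:cone-dF}, which gives
\[
0=\dd\widetilde\Phi=e^{-2t}\{\dd\Phi+\dd t\wedge(\dd\eta-2\Phi)\}.
\]
The forms $\Phi$, $\eta$, $\dd\Phi$ and $\dd\eta$ are pulled back from $M$ and contain no $\dd t$. So I separate the components of this $3$-form according to whether they contain $\dd t$. The part without $\dd t$ gives $\dd\Phi=0$, which is automatic on a $3$-manifold anyway. The part with $\dd t$ gives $\dd t\wedge(\dd\eta-2\Phi)=0$. To conclude $\dd\eta-2\Phi=0$, I insert $T=\partial_t$ into this expression; since $\dd\eta-2\Phi$ annihilates $T$, the contraction is exactly $\dd\eta-2\Phi$. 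Thus $\dd\eta=2\Phi$, and the structure is contact metric by definition.

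There is no real obstacle. The only point needing care is the type count: the vanishing of $\Lambda^{3,0}$ in complex dimension $2$ holds even without integrability, since the type decomposition is pointwise linear algebra.
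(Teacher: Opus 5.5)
Your proof is correct and follows the paper's own argument: there are no $(3,0)$- or $(0,3)$-forms in complex dimension $2$, so Lemma~\ref{lem:QK-type} forces $\dd\widetilde\Phi=0$, and \eqref{eq:cone-dF}, contracted with $T$ as you do, gives $\dd\eta=2\Phi$. Your aside that $\dd\Phi=0$ is automatic on a $3$-manifold is false, since $\dd\Phi$ is a top-degree form equal to $\pm(\operatorname{div}\xi)\,\mathrm{vol}_g$ (nonzero, e.g., for Kenmotsu structures), but your argument never uses it.
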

\begin{proof}
The cone has complex dimension $2$, so there are no nonzero forms of type $(3,0)$ or $(0,3)$. Lemma~\ref{lem:QK-type} gives $\dd\widetilde\Phi=0$, and \eqref{eq:cone-dF} yields $\dd\eta=2\Phi$.
\end{proof}

\medskip
Let $B$ be a horizontal $2$-form, i.e. a $2$-form such that $i_{\xi}B=0$.
We say that $B$ is $\varphi$-anti-invariant if
$B(\varphi X,\varphi Y)=-B(X,Y)$, for any $X,Y\in\D$.
In that case $B_\varphi(X,Y):=B(\varphi X,Y)$ is again a $2$-form, and $B-iB_\varphi$ has horizontal type $(2,0)$.

\begin{proposition}\label{prop:horizontal-test}
An almost contact metric structure is quasi-contact if and only if the following conditions hold, with $B=\dd\eta-2\Phi$:
\begin{align}
\iota_\xi\dd\eta&=0,\label{eq:hor1}\\
B(\varphi X,\varphi Y)&=-B(X,Y),\label{eq:hor2}\\
\dd\Phi(\xi,X,Y)&=B(\varphi X,Y),\label{eq:hor3}
\end{align}
for $X,Y\in\D$, and $\dd\Phi|_{\D}$ has horizontal type $(3,0)+(0,3)$. In dimension $5$, the last condition is simply $\dd\Phi|_{\D}=0$.
\end{proposition}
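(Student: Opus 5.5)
By Lemma~\ref{lem:QK-type}, the structure is quasi-contact if and only if $\dd\widetilde\Phi$ has type $(3,0)+(0,3)$ with respect to $\widetilde J$. The plan is to decompose the complexified cotangent space of the cone into types and read off the components of \eqref{eq:cone-dF} one by one. Since $e^{-2t}$ is a nonvanishing factor, it suffices to study
\[
\Psi=\dd\Phi+\dd t\wedge B,\qquad B=\dd\eta-2\Phi .
\]

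First we fix a basis of $(1,0)$-forms for $\widetilde J$. From $\widetilde JX=\varphi X-\eta(X)T$ and $\widetilde JT=\xi$, we compute $\eta\circ\widetilde J=\dd t$ and $\dd t\circ\widetilde J=-\eta$. Hence $\theta=\eta+i\,\dd t$ satisfies $\theta\circ\widetilde J=\dd t-i\eta=-i\theta$, so $\theta$ has type $(0,1)$ and $\overline\theta$ has type $(1,0)$. For horizontal $1$-forms $\alpha$, meaning $\alpha(\xi)=0$ extended by $\alpha(T)=0$, we have $\alpha\circ\widetilde J=\alpha\circ\varphi$. So the horizontal $(1,0)$-forms are those of the form $\alpha-i\,\alpha\circ\varphi$. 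Every form on the cone then splits into horizontal pieces wedged with powers of $\theta$ and $\overline\theta$.

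Next we rewrite $\Psi$ in these terms, using
\[
\dd t=\tfrac{1}{2i}(\theta-\overline\theta),\qquad \eta=\tfrac12(\theta+\overline\theta).
\]
Decompose $\dd\Phi=\dd\Phi_{\D}+\eta\wedge(\iota_\xi\dd\Phi)$ and $B=B_{\D}+\eta\wedge\iota_\xi B$, where the subscript $\D$ denotes the purely horizontal part. Note that $\iota_\xi\Phi=0$, so $\iota_\xi B=\iota_\xi\dd\eta$. Then $\Psi$ contains a term proportional to $\eta\wedge\dd t\wedge\iota_\xi\dd\eta$, which is a multiple of $\theta\wedge\overline\theta\wedge(\cdot)$. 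Such a term has a mixed component whatever the type of $\iota_\xi\dd\eta$, and nothing else can cancel it, so it must vanish. This gives \eqref{eq:hor1}. The purely horizontal $3$-form $\dd\Phi_{\D}$ must contain no $(2,1)$ or $(1,2)$ parts, which is the stated horizontal type condition. In dimension $5$, $\D$ has complex rank $2$, so no horizontal $(3,0)$-forms exist and the condition reduces to $\dd\Phi|_{\D}=0$.

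The remaining terms are $\theta\wedge\frac{1}{2}\bigl(\iota_\xi\dd\Phi+iB_{\D}\bigr)$ and its conjugate-type partner, up to constants. A term $\theta\wedge\beta$ has type $(0,3)$ exactly when $\beta$ is horizontal of type $(0,2)$; otherwise its remaining components are mixed and must vanish. So we need
\[
\iota_\xi\dd\Phi-iB
\]
(taking signs as they come out of the computation) to be of pure type $(0,2)$, with the $\overline\theta$-term giving the conjugate condition. Splitting a real horizontal $2$-form into its $\varphi$-invariant part (type $(1,1)$) and its anti-invariant part (type $(2,0)+(0,2)$), this requirement becomes three facts. First, both $B$ and $\iota_\xi\dd\Phi$ must be $\varphi$-anti-invariant; this gives \eqref{eq:hor2}. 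Second, their $(2,0)$-parts must cancel. Using that $B-iB_\varphi$ has horizontal type $(2,0)$, this cancellation is exactly $\iota_\xi\dd\Phi=B_\varphi$ on $\D$, which is \eqref{eq:hor3}. Third, the anti-invariance of $\iota_\xi\dd\Phi$ then follows automatically. Each step in this chain is an equivalence, which gives both directions of the proposition.

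The main obstacle is bookkeeping: signs and factors of $i$ in the $\theta$-expansion, and making sure the identification of the $(2,0)$-part of $B$ lands precisely on $B(\varphi X,Y)$ rather than on $-B(\varphi X,Y)$. To control this, we would check the final relation against the sign conventions of \eqref{eq:cone-dF} by evaluating $\Psi$ directly on triples $(T,X,Y)$ and $(\xi,X,Y)$ with $X,Y\in\D$, using $(1,0)$-vectors of the form $X-i\varphi X$ and $T+i\xi$ or $\xi-iT$ as appropriate. The $(2,1)$-component vanishes if and only if $\Psi$ is zero on two $(1,0)$-vectors and one $(0,1)$-vector. Writing out $\Psi(\xi-iT,Z_1,\overline Z_2)$ and $\Psi(\xi-iT,Z_1,Z_2)$ with $Z_j=X_j-i\varphi X_j$ then yields \eqref{eq:hor1}--\eqref{eq:hor3} directly and fixes the sign.
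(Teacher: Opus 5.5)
Your proposal is correct and follows essentially the same route as the paper: the paper also splits the cone coframe into the horizontal part and the line of $\zeta=\dd t+i\eta$ (your $i\overline\theta$), discards the $\dd t\wedge\eta$ term to obtain \eqref{eq:hor1}, reads off the horizontal type condition, and writes the remaining part as $\Rea(\zeta\wedge\psi)$ with $\psi=B_0-i(B_0)_\varphi$, which is your requirement that the $\theta$-coefficient have pure type $(0,2)$. On the bookkeeping you flagged: the coefficient of $\theta$ is $\tfrac12(\iota_\xi\dd\Phi-iB_{\D})$, not $+iB_{\D}$, and this minus sign is what gives $+B(\varphi X,Y)$ in \eqref{eq:hor3}; also, $\xi-iT$ and $T+i\xi$ are proportional $(0,1)$-vectors (not $(1,0)$-vectors), so both of your test evaluations are mixed-type and must vanish.
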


\begin{proof}
The $(1,0)$-cotangent space of the cone splits as the horizontal $(1,0)$-space and the complex line generated by $\zeta=\dd t+i\eta$, which has type $(1,0)$. Write
\[
B=\eta\wedge\alpha+B_0,\qquad
\dd\Phi=\eta\wedge\sigma+\tau,
\]
where $\alpha,B_0,\sigma,\tau$ are horizontal. By \eqref{eq:cone-dF}, up to the positive factor $e^{-2t}$ the exterior derivative of the cone form is
\[
\tau+\eta\wedge\sigma+\dd t\wedge B_0
+\dd t\wedge\eta\wedge\alpha.
\]
A form of pure type $(3,0)+(0,3)$ cannot contain two factors from the real plane generated by $\dd t$ and $\eta$, so $\alpha=0$. The component with no such factor must be horizontal of pure type. The remaining part must be the real part of $\zeta\wedge\psi$ for a horizontal $(2,0)$-form $\psi$. Writing
\[
\psi=B_0-i(B_0)_\varphi
\]
shows that this is equivalent to anti-invariance of $B_0$ and $\sigma=(B_0)_\varphi$. These are exactly \eqref{eq:hor1}--\eqref{eq:hor3}. In dimension five, the horizontal complex dimension is $2$.
\end{proof}

\begin{corollary}[\cite{PSS15}]\label{cor:eta-contact}
The $1$-form of any quasi-contact metric structure is a contact form, and its characteristic field is its Reeb vector field.
\end{corollary}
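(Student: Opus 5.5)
The statement to prove is Corollary~\ref{cor:eta-contact}. I will use Proposition~\ref{prop:horizontal-test}, which says that for a quasi-contact metric structure $\iota_\xi\dd\eta=0$ and $B=\dd\eta-2\Phi$ is horizontal and $\varphi$-anti-invariant on $\D$. The strategy is to decompose $\dd\eta|_{\D}$ into its $\varphi$-invariant part $2\Phi$ and its anti-invariant part $B$. Nondegeneracy of $\dd\eta$ on $\D$ then follows from a positivity argument.

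First, $\eta(\xi)=1$ and $\iota_\xi\dd\eta=0$ by \eqref{eq:hor1}. These are exactly the Reeb conditions, so once $\eta$ is shown to be contact, $\xi$ is its Reeb field. It remains to show that $\eta\wedge(\dd\eta)^n\neq0$, which is equivalent to the restriction of $\dd\eta$ to $\D$ being nondegenerate.

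Suppose $X\in\D$ satisfies $\dd\eta(X,Y)=0$ for all $Y\in\D$. Evaluate at $Y=\varphi X$. Then
\[
0=\dd\eta(X,\varphi X)=2\Phi(X,\varphi X)+B(X,\varphi X).
\]
Here $\Phi(X,\varphi X)=g(X,\varphi^2X)=-g(X,X)$. For the other term, anti-invariance \eqref{eq:hor2} together with $\varphi^2X=-X$ gives
\[
B(X,\varphi X)=-B(\varphi X,\varphi^2X)=B(\varphi X,X)=-B(X,\varphi X),
\]
so $B(X,\varphi X)=0$. Hence $0=-2g(X,X)$, which forces $X=0$. So $\dd\eta|_{\D}$ is nondegenerate and $\eta$ is a contact form.

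The only real point is the observation that the anti-invariant part $B$ contributes nothing on the pairs $(X,\varphi X)$. Everything else is immediate from Proposition~\ref{prop:horizontal-test}; the main thing to verify is that the sign conventions in $\Phi(X,Y)=g(X,\varphi Y)$ give a definite (here negative) value on these pairs.
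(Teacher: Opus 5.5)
Your proposal is correct and follows the paper's proof: you take the Reeb condition from \eqref{eq:hor1} and use the anti-invariance \eqref{eq:hor2} to get $B(X,\varphi X)=0$, so that $\dd\eta(X,\varphi X)=-2g(X,X)$ forces $\dd\eta|_{\D}$ to be nondegenerate. The only difference is that you write out the skew-symmetry computation for $B(X,\varphi X)=0$, which the paper leaves implicit.
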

\begin{proof}
For $X\in\D$, anti-invariance and skew-symmetry imply $B(X,\varphi X)=0$. Hence
\[
\dd\eta(X,\varphi X)=2\Phi(X,\varphi X)=-2g(X,X).
\]
No nonzero horizontal vector can belong to the kernel of $\dd\eta|_{\D}$. Thus this $2$-form is nondegenerate and $\eta\wedge(\dd\eta)^n$ never vanishes. Equation \eqref{eq:hor1}, together with $\eta(\xi)=1$, identifies the Reeb field.
\end{proof}

\medskip
For vector fields on $M$, the Levi-Civita connection of the metric \eqref{eq:cone} is
\begin{equation}\label{eq:cone-LC}
\begin{aligned}
\widetilde\nabla_XY&=\nabla_XY+g(X,Y)T,&
\widetilde\nabla_XT&=-X,\\
\widetilde\nabla_TX&=-X,&
\widetilde\nabla_TT&=-T.
\end{aligned}
\end{equation}
Consequently,
\begin{align}
(\widetilde\nabla_X\widetilde J)Y
={}&(\nabla_X\varphi)Y+\eta(Y)X-g(X,Y)\xi
+\{\Phi(X,Y)-(\nabla_X\eta)(Y)\}T,
\label{eq:cone-nabla1}\\
(\widetilde\nabla_X\widetilde J)T
={}&\nabla_X\xi+\varphi X,
\qquad \widetilde\nabla_T\widetilde J=0.
\label{eq:cone-nabla2}
\end{align}

The following identities are also recorded in \cite{KPS,BJPS}. We derive them directly from the cone.

\begin{lemma}\label{lem:h-identities}
On a quasi-contact metric manifold, put $h=\frac12\Lie_\xi\varphi$. Then
\begin{gather}
\nabla_\xi\xi=0,\qquad \nabla_\xi\varphi=0,\qquad
h\xi=0,\qquad \eta\circ h=0,\qquad h\varphi=-\varphi h,\label{eq:h-basic}\\
\nabla_X\xi=-\varphi X-\varphi hX.
\label{eq:h-nabla-xi}
\end{gather}
If $h_a=\frac12(h-h^*)$ is the skew-adjoint part of $h$, then
\begin{equation}\label{eq:contact-defect}
\dd\eta(X,Y)-2\Phi(X,Y)=-2g(\varphi h_aX,Y).
\end{equation}
In particular, the structure is contact metric if and only if $h=h^*$.
\end{lemma}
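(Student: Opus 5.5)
The plan is to extract everything from the quasi-K\"ahler equation \eqref{eq:QK} on the cone, tested on the pairs $(U,V)\in\{(T,X),(\xi,X),(X,T),(X,Y)\}$, and combined with \eqref{eq:cone-nabla1}--\eqref{eq:cone-nabla2}. First, I take $U=\xi$ and $V=T$. Then $\widetilde JU=-T$, and since $\widetilde\nabla_T\widetilde J=0$, \eqref{eq:QK} reduces to $(\widetilde\nabla_\xi\widetilde J)T=0$. By \eqref{eq:cone-nabla2} this gives $\nabla_\xi\xi+\varphi\xi=\nabla_\xi\xi=0$. Next, I take $U=\xi$ and $V=X$ with $X\in\D$. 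The $\widetilde JU$ term again vanishes, so $(\widetilde\nabla_\xi\widetilde J)X=0$. Using \eqref{eq:cone-nabla1}, the tangential part gives $(\nabla_\xi\varphi)X=0$, and $(\nabla_\xi\varphi)\xi=-\varphi\nabla_\xi\xi=0$. So $\nabla_\xi\varphi=0$.

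For the remaining algebraic identities I write $2hX=[\xi,\varphi X]-\varphi[\xi,X]$. Expanding the brackets with torsion-freeness and using $\nabla_\xi\varphi=0$ gives
\[
2hX=-\nabla_{\varphi X}\xi+\varphi\nabla_X\xi .
\]
Then $h\xi=0$ follows from $\nabla_\xi\xi=0$. Also $\eta\circ h=0$, because $g(\nabla_Y\xi,\xi)=0$ for every $Y$ and $\eta\circ\varphi=0$.

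To get \eqref{eq:h-nabla-xi}, I apply \eqref{eq:QK} with $U=X\in\D$ and $V=T$. Here $\widetilde JX=\varphi X$ and $\widetilde JT=\xi$, so the equation becomes
\[
\nabla_X\xi+\varphi X+(\widetilde\nabla_{\varphi X}\widetilde J)\xi=0 .
\]
Evaluating the last term by \eqref{eq:cone-nabla1} yields
\[
(\nabla_{\varphi X}\varphi)\xi+\varphi X+\bigl\{\Phi(\varphi X,\xi)-(\nabla_{\varphi X}\eta)(\xi)\bigr\}T=-\varphi\nabla_{\varphi X}\xi+\varphi X ,
\]
where the $T$-component vanishes. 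So I obtain
\[
\nabla_X\xi+2\varphi X-\varphi\nabla_{\varphi X}\xi=0 .
\]
Let $A=\nabla\xi$, which maps into $\D$. This relation says $A+2\varphi=\varphi A\varphi$ on $\D$. Combining it with $2h=-A\varphi+\varphi A$, applying $\varphi$ and using $\varphi^2=-P$, I solve for $A$ and get $A=-\varphi-\varphi h$ on $\D$. Since $A\xi=0$, this holds on all of $TM$. Substituting back then forces $h\varphi=-\varphi h$. I expect this linear-algebra elimination, with careful sign bookkeeping, to be the main obstacle; if it closes prematurely, I would additionally use the $(X,Y)$-component of \eqref{eq:QK} to supply the anticommutation.

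Finally, for \eqref{eq:contact-defect} I compute
\[
\dd\eta(X,Y)=g(\nabla_X\xi,Y)-g(\nabla_Y\xi,X)
\]
and insert \eqref{eq:h-nabla-xi}. The terms $-g(\varphi X,Y)+g(\varphi Y,X)$ give $2\Phi(X,Y)$. The remaining terms are
\[
-g(\varphi hX,Y)+g(\varphi hY,X)=-g(\varphi hX,Y)-g(Y,h^*\varphi X).
\]
Using $h^*\varphi=-\varphi h^*$ (the adjoint of $h\varphi=-\varphi h$, since $\varphi^*=-\varphi$), this equals $-g(\varphi(h-h^*)X,Y)=-2g(\varphi h_aX,Y)$. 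Since $\varphi$ is injective on $\D$ and $h_a$ preserves $\D$ and kills $\xi$, the right-hand side vanishes if and only if $h_a=0$, i.e.\ $h=h^*$.
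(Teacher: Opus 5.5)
Your proposal is correct and follows essentially the paper's route. It uses the same cone test pairs (your $U=\xi$ is equivalent to the paper's $U=T$), the identity $2h=\varphi A-A\varphi$, the same elimination giving $A=-\varphi-\varphi h$, and the same computation $\dd\eta(X,Y)=g((A-A^*)X,Y)$.

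The only variation is that you obtain $h\xi=0$, $\eta\circ h=0$ and $h\varphi=-\varphi h$ from this formula for $h$. The paper instead Lie-differentiates the almost contact identities using $\Lie_\xi\eta=0$. Your back-substitution does close; in fact $2(h\varphi+\varphi h)=\varphi^2A-A\varphi^2=0$ directly, so the fallback to the $(X,Y)$-component is not needed.
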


\begin{proof}
Use \eqref{eq:QK} with $U=T$. Since $\widetilde\nabla_T\widetilde J=0$ and $\widetilde JT=\xi$, one obtains $\widetilde\nabla_\xi\widetilde J=0$. Equations \eqref{eq:cone-nabla1}--\eqref{eq:cone-nabla2} then give $\nabla_\xi\xi=0$ and $\nabla_\xi\varphi=0$.

Write $S(X)=\nabla_X\xi$. The vector field $\xi$ has unit length, so $S$ takes values in $\D$, and $S\xi=0$. Taking $U=X$, $V=T$ in \eqref{eq:QK} gives
\[
S-\varphi S\varphi=-2\varphi.
\]
On the other hand, the torsion-free property of $\nabla$ yields
\[
2h=\Lie_\xi\varphi=\nabla_\xi\varphi-S\varphi+\varphi S
=-S\varphi+\varphi S.
\]
Multiplication by $\varphi$ and use of the preceding identity give $\varphi h=-S-\varphi$, which is \eqref{eq:h-nabla-xi}.

Since $\nabla_\xi\xi=0$, the identity $\Lie_\xi\eta=\iota_\xi\dd\eta=0$ follows directly from unit length. Lie differentiation of \eqref{eq:ac-algebra} now gives $h\xi=0$, $\eta\circ h=0$ and $h\varphi+\varphi h=0$. The adjoint $h^*$ also anticommutes with $\varphi$. Therefore
\[
S-S^*=-2\varphi-\varphi(h-h^*)=-2\varphi-2\varphi h_a.
\]
Pairing this identity with $X,Y$ gives \eqref{eq:contact-defect}. The last assertion follows because $\varphi$ is invertible on $\D$ and $h_a$ vanishes on $\xi$ and takes values in $\D$.
\end{proof}

\begin{proposition}\label{prop:intrinsic-identity}
Every quasi-contact metric structure satisfies
\begin{equation}\label{eq:general-intrinsic-QC}
\begin{split}
(\nabla_X\varphi)Y+(\nabla_{\varphi X}\varphi)\varphi Y
={}&2g(X,Y)\xi-\eta(Y)X-\eta(X)\eta(Y)\xi-\eta(Y)hX.
\end{split}
\end{equation}
\end{proposition}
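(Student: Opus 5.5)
Take $U=X$ and $V=Y$ in \eqref{eq:QK}, with $X,Y$ tangent to $M$, and expand both terms using \eqref{eq:cone-nabla1}--\eqref{eq:cone-nabla2}. The only delicate points are that $\widetilde JY=\varphi Y-\eta(Y)T$ is not tangent to $M$ and that $\widetilde JX$ has a $T$-component. For the second term, write $\widetilde JX=\varphi X-\eta(X)T$. Since $\widetilde\nabla_T\widetilde J=0$, we get $(\widetilde\nabla_{\widetilde JX}\widetilde J)=(\widetilde\nabla_{\varphi X}\widetilde J)$. Then expand
\[
(\widetilde\nabla_{\varphi X}\widetilde J)\widetilde JY=(\widetilde\nabla_{\varphi X}\widetilde J)\varphi Y-\eta(Y)(\widetilde\nabla_{\varphi X}\widetilde J)T .
\]
Apply \eqref{eq:cone-nabla1} to the first piece and \eqref{eq:cone-nabla2} to the second. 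The latter gives $-\eta(Y)\{\nabla_{\varphi X}\xi+\varphi^2X\}$.

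Next, take the component tangent to $M$ of the sum; the $T$-component is a separate scalar identity that we do not need. The tangential part of $(\widetilde\nabla_X\widetilde J)Y$ is
\[
(\nabla_X\varphi)Y+\eta(Y)X-g(X,Y)\xi .
\]
The tangential part of $(\widetilde\nabla_{\varphi X}\widetilde J)\varphi Y$ is
\[
(\nabla_{\varphi X}\varphi)\varphi Y+\eta(\varphi Y)\varphi X-g(\varphi X,\varphi Y)\xi=(\nabla_{\varphi X}\varphi)\varphi Y-\{g(X,Y)-\eta(X)\eta(Y)\}\xi,
\]
by \eqref{eq:ac-algebra}--\eqref{eq:ac-metric}. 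For the remaining piece, use \eqref{eq:h-nabla-xi} from Lemma~\ref{lem:h-identities}:
\[
\nabla_{\varphi X}\xi=-\varphi^2X-\varphi h\varphi X=-\varphi^2X+\varphi^2 hX,
\]
using $h\varphi=-\varphi h$. Hence $\nabla_{\varphi X}\xi+\varphi^2X=\varphi^2hX=-hX$, because $\eta\circ h=0$. So this piece contributes $+\eta(Y)hX$.

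Adding the three contributions and setting the sum to zero gives
\[
(\nabla_X\varphi)Y+(\nabla_{\varphi X}\varphi)\varphi Y+\eta(Y)X-2g(X,Y)\xi+\eta(X)\eta(Y)\xi+\eta(Y)hX=0,
\]
which is exactly \eqref{eq:general-intrinsic-QC}.

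The main obstacle is bookkeeping: correctly handling the $T$-components of $\widetilde JX$ and $\widetilde JY$, and the signs in $\nabla_{\varphi X}\xi$. The key input there is \eqref{eq:h-nabla-xi}, together with the anticommutation $h\varphi=-\varphi h$ and $\eta\circ h=0$ from Lemma~\ref{lem:h-identities}.
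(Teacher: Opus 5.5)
Your proof is correct and takes essentially the same route as the paper: substitute $\widetilde JX$ and $\widetilde JY$ into \eqref{eq:QK}, take the component tangent to $M$ via \eqref{eq:cone-nabla1}--\eqref{eq:cone-nabla2}, and simplify using \eqref{eq:h-nabla-xi}, $h\varphi=-\varphi h$ and $g(\varphi X,\varphi Y)=g(X,Y)-\eta(X)\eta(Y)$. You also make explicit the use of $\eta\circ h=0$ in the step $\varphi^2hX=-hX$, which the paper leaves implicit.
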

\begin{proof}
Put $\widetilde JX=\varphi X-\eta(X)T$ and
$\widetilde JY=\varphi Y-\eta(Y)T$ into the quasi-K\"ahler equation.
By \eqref{eq:cone-nabla1}--\eqref{eq:cone-nabla2}, the tangent part of its left-hand side is
\begin{align*}
 &(\nabla_X\varphi)Y+\eta(Y)X-g(X,Y)\xi
 +(\nabla_{\varphi X}\varphi)\varphi Y
 -g(\varphi X,\varphi Y)\xi-\eta(Y)\{\nabla_{\varphi X}\xi+\varphi^2X\}.
\end{align*}
Equation \eqref{eq:h-nabla-xi} and $h\varphi=-\varphi h$ give
$\nabla_{\varphi X}\xi+\varphi^2X=-hX$.
Using $g(\varphi X,\varphi Y)=g(X,Y)-\eta(X)\eta(Y)$ yields
\eqref{eq:general-intrinsic-QC}. This is the identity in
\cite[Theorem~2.1]{BJPS}, obtained here directly from the cone.
\end{proof}

\begin{lemma}\label{lem:NK-cone}
A cone of the form \eqref{eq:cone} is nearly K\"ahler if and only if the underlying almost contact metric structure is Sasakian. In this case the cone is K\"ahler.
\end{lemma}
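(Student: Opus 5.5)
Recall that the cone is nearly Kähler exactly when $(\widetilde\nabla_U\widetilde J)U=0$ for every $U$, or equivalently when $(\widetilde\nabla_U\widetilde J)V$ is skew in $U,V$. I will test this condition using \eqref{eq:cone-nabla1}--\eqref{eq:cone-nabla2}. The main step is to take $U=T$ and $V=X$, where $X$ is tangent to $M$. The first identity in \eqref{eq:cone-nabla2} gives $(\widetilde\nabla_T\widetilde J)X=0$. The nearly Kähler condition then forces
\[
(\widetilde\nabla_X\widetilde J)T=\nabla_X\xi+\varphi X=0,
\]
that is, $\nabla_X\xi=-\varphi X$.

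Next I take $U=V=X$ in \eqref{eq:cone-nabla1} and split the result into its $T$-component and its $M$-component. The $T$-component is $\Phi(X,X)-(\nabla_X\eta)(X)$. Here $\Phi(X,X)=0$, and $(\nabla_X\eta)(X)=g(\nabla_X\xi,X)=-g(\varphi X,X)=0$, so this component vanishes automatically. The $M$-component gives
\[
(\nabla_X\varphi)X=g(X,X)\xi-\eta(X)X .
\]
To recover the full Sasakian identity \eqref{eq:Sasakian}, I polarize this in $X$. That yields only the symmetrized version,
\[
(\nabla_X\varphi)Y+(\nabla_Y\varphi)X=2g(X,Y)\xi-\eta(X)Y-\eta(Y)X .
\]
For the antisymmetric part I use the fact that $\nabla\Phi$ is totally skew. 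On a nearly Kähler manifold, $\widetilde\nabla\widetilde\Phi$ is a $3$-form. I restrict this $3$-form to triples of vectors tangent to $M$ and express it through $\nabla\Phi$ together with the correction terms coming from $g(X,Y)T$. This should show that $(X,Y,Z)\mapsto g\bigl((\nabla_X\varphi)Y-g(X,Y)\xi+\eta(Y)X,\,Z\bigr)$ is skew in $X$ and $Y$. It is also symmetric in $X$ and $Y$ by the polarized identity, so it vanishes. This gives \eqref{eq:Sasakian}.

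Finally I handle the second claim and the converse. If the structure is Sasakian, then \eqref{eq:Sasakian} together with $\nabla\xi=-\varphi$ makes every term in \eqref{eq:cone-nabla1} vanish. Indeed, $(\nabla_X\eta)(Y)=g(-\varphi X,Y)=\Phi(X,Y)$, so the $T$-part cancels, and the tangential part cancels directly by \eqref{eq:Sasakian}. Equation \eqref{eq:cone-nabla2} also vanishes. Hence $\widetilde\nabla\widetilde J=0$, so the cone is Kähler and in particular nearly Kähler. I expect the main obstacle to be the antisymmetric part in the forward direction, namely checking the skew-symmetry bookkeeping carefully against the $T$-terms. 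If that becomes messy, a shortcut is available: a nearly Kähler manifold with $\widetilde\nabla_T\widetilde J=0$ and $\widetilde\nabla_X\widetilde J\,T=0$ has $\widetilde\nabla\widetilde\Phi$ vanishing on every triple containing $T$. The cone structure, that is, invariance under homothety, should then force the purely tangential part to vanish as well, since it carries the wrong scaling weight.
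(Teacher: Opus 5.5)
Your first two steps and the converse are correct. The choice $U=T$, $V=X$ gives $\nabla_X\xi=-\varphi X$. The choice $U=V=X$ gives $Q(X,X)=0$, where $Q(X,Y):=(\nabla_X\varphi)Y-g(X,Y)\xi+\eta(Y)X$. The gap is in the step that is supposed to kill the antisymmetric part.

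Polarizing $Q(X,X)=0$ gives $Q(X,Y)+Q(Y,X)=0$. So the polarized identity says that $Q$ is \emph{skew} in $X,Y$, not symmetric. Restricting the $3$-form $\widetilde\nabla\widetilde\Phi$ to tangential triples adds nothing. By \eqref{eq:cone-nabla1}, $\sigma(X,Y,Z):=g(Q(X,Y),Z)=-e^{2t}(\widetilde\nabla_X\widetilde\Phi)(Y,Z)$. This is automatically skew in $Y,Z$, because $\nabla_X\varphi$ is skew-adjoint and the terms $\eta(Y)g(X,Z)-g(X,Y)\eta(Z)$ are skew. Its skewness in $X,Y$ is exactly $Q(X,X)=0$. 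Your ``skew and symmetric, hence zero'' therefore compares one fact with itself, and a nonzero alternating $Q$ is not yet excluded. The only unused input is $\nabla\xi=-\varphi$, which you applied only to a $T$-component that vanishes anyway.

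The missing idea is closedness, and this is how the paper concludes. From $\nabla\xi=-\varphi$ you get $\dd\eta(X,Y)=g(\nabla_X\xi,Y)-g(\nabla_Y\xi,X)=2\Phi(X,Y)$, hence $\dd\Phi=\tfrac12\dd\dd\eta=0$. The cyclic sum of $\sigma$ equals $-\dd\Phi$, since the $\eta\otimes g$ terms cancel cyclically. Because $\sigma$ is totally skew, this gives $3\sigma=-\dd\Phi=0$. Hence $Q=0$, which is \eqref{eq:Sasakian}, and $\dd\eta=2\Phi$ supplies the contact metric condition.

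The paper says the same thing on the cone. There $\dd\widetilde\Phi=3\widetilde\nabla\widetilde\Phi$, and its contraction with $T$ vanishes because $\widetilde\nabla_T\widetilde J=0$. By \eqref{eq:cone-dF} this contraction is $e^{-2t}(\dd\eta-2\Phi)$, so $\dd\Phi=0$, hence $\dd\widetilde\Phi=0$ and $\widetilde\nabla\widetilde J=0$.

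Your fallback has the same defect as stated. A tangential $3$-form $e^{-2t}\tau$ has exactly the dilation weight of $\widetilde\nabla\widetilde\Phi$, namely that of $\widetilde\Phi$, so homogeneity alone cannot make it vanish. It becomes a proof once you add that $\widetilde\nabla\widetilde\Phi=\tfrac13\dd\widetilde\Phi$ is closed. Then $\iota_E\dd\widetilde\Phi=0$ gives $2\dd\widetilde\Phi=\Lie_E\dd\widetilde\Phi=\dd\iota_E\dd\widetilde\Phi=0$.
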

\begin{proof}
On a nearly K\"ahler manifold, $\widetilde\nabla\widetilde\Phi$ is a $3$-form and $\dd\widetilde\Phi=3\widetilde\nabla\widetilde\Phi$. By \eqref{eq:cone-nabla2}, its covariant derivative in the $T$-direction vanishes. Thus $\iota_T\dd\widetilde\Phi=0$, and \eqref{eq:cone-dF} gives $\dd\eta=2\Phi$. It follows that $\dd\widetilde\Phi=0$, hence $\widetilde\nabla\widetilde J=0$. Equations \eqref{eq:cone-nabla1}--\eqref{eq:cone-nabla2} imply \eqref{eq:Sasakian}. The converse follows by substitution in those same equations.
\end{proof}

\section{Sasaki-Einstein \texorpdfstring{$\SU(2)$}{SU(2)}-structures in dimension 5}\label{sec:SE}


An $\SU(2)$-structure on a $5$-manifold can be described by three almost contact metric structures $(\varphi_i,\xi_0,\eta_0,g_0)$ sharing the same metric, characteristic field and $1$-form. On $\D=\ker\eta_0$ their endomorphisms satisfy the quaternionic relations
\begin{equation}\label{eq:quaternion}
\varphi_\alpha\varphi_\beta=\varphi_\gamma,
\end{equation}
for a cyclic permutation $(\alpha,\beta,\gamma)$ of $\left\{1,2,3\right\}$. Define
$\omega_i(X,Y)=g_0(\varphi_iX,Y)$ (notice the order of the arguments, the fundamental form of $(\varphi_i,\xi_0,\eta_0,g_0)$ being $-\omega_i$).
We shall consider Sasaki-Einstein $\SU(2)$-structures, which are
characterized by the following additional differential equations:
\begin{equation}\label{eq:SE-d}
\dd\eta_0=-2\omega_3,\qquad
\dd\omega_1=3\eta_0\wedge\omega_2,\qquad
\dd\omega_2=-3\eta_0\wedge\omega_1,\qquad
\dd\omega_3=0.
\end{equation}
These are the Sasaki-Einstein equations in \cite[Section~4]{CD}.
The associated metric satisfies $\Ric_{g_0}=4g_0$, and
$(\varphi_3,\xi_0,\eta_0,g_0)$ is Sasakian.
 We will need the following covariant form of the equations:
\begin{align}
\nabla^0_X\xi_0&=-\varphi_3X,\label{eq:SE-xi}\\
(\nabla^0_X\varphi_3)Y
&=g_0(X,Y)\xi_0-\eta_0(Y)X,\label{eq:SE-phi3}\\
(\nabla^0_X\varphi_1)Y
&=\eta_0(X)\varphi_2Y-\eta_0(Y)\varphi_2X
+\omega_2(X,Y)\xi_0,\label{eq:SE-phi1}\\
(\nabla^0_X\varphi_2)Y
&=-\eta_0(X)\varphi_1Y+\eta_0(Y)\varphi_1X
-\omega_1(X,Y)\xi_0.
\label{eq:SE-phi2}
\end{align}
For example, \eqref{eq:SE-phi1}--\eqref{eq:SE-phi2} also follow from the nearly cosymplectic structures described in \cite[Corollary~5.2]{CD}. 


We now explain how the supplementary tensors of the
$\mathrm{SU}(2)$-structure arise from the K\"ahler cone.
Let
$(M^5,\varphi_3,\xi_0,\eta_0,g_0)$
be a Sasaki-Einstein manifold, where
$(\varphi_3,\xi_0,\eta_0,g_0)$ denotes its given Sasakian structure.
At this stage, only the Sasakian endomorphism $\varphi_3$ is being
used; the additional endomorphisms $\varphi_1$ and $\varphi_2$ will
be obtained below from a parallel complex volume form on the cone.

Consider the Riemannian cone
\[
C(M)=M\times\mathbb R_{>0},\qquad
G_0=\dd r^2+r^2g_0.
\]
Since $M$ is Sasaki-Einstein, $G_0$ is Ricci-flat K\"ahler.
We choose the sign of the cone complex structure $I$ so that
\[
I(r\partial_r)=\xi_0.
\]
For every vector field $X$ tangent to $M$, one then has
\begin{equation}\label{eq:cone-I}
IX=-\varphi_3X-\eta_0(X)\,r\partial_r.
\end{equation}
Let us compare this convention with the cone almost complex structure
$\widetilde J$ introduced in Section~\ref{sec:cones}. There we used
the coordinate $t$, with $r=e^{-t}$, and
\[
\widetilde JX=\varphi_3X-\eta_0(X)\partial_t,
\qquad
\widetilde J\partial_t=\xi_0.
\]
Since
$\partial_t=-r\partial_r$,
it follows that
\[
\widetilde J(r\partial_r)=-\xi_0,
\qquad
\widetilde JX=\varphi_3X+\eta_0(X)r\partial_r.
\]
Hence the K\"ahler complex structure used in the present section is
simply
\[
I=-\widetilde J.
\]
Of course, replacing an almost complex structure by its negative does
not affect the K\"ahler condition.
Restrict now to the hypersurface $r=1$ and put
\[
N=\partial_r.
\]
Then \eqref{eq:cone-I} becomes
\[
IX=-\varphi_3X-\eta_0(X)N.
\]
Suppose that the cone carries a nowhere vanishing parallel
$(3,0)$-form $\Omega$. We fix its constant scale by requiring
\[
\frac{i}{8}\Omega\wedge\overline{\Omega}
=\operatorname{vol}_{G_0},
\]
where the volume form is taken with the complex orientation
induced by $I$.
Define
\begin{equation}\label{eq:alpha-cone}
\alpha(X,Y)=\Omega(N,X,Y),\qquad
\omega_1=\Rea\alpha,\qquad
\omega_2=-\Ima\alpha.
\end{equation}
The form $\alpha$ is horizontal because $\xi_0=IN$ and $\Omega$ is complex multilinear. In a unitary frame the real and imaginary parts of \eqref{eq:alpha-cone}, together with $\omega_3=g_0(\varphi_3\cdot,\cdot)$, give \eqref{eq:quaternion}.
The cone connection satisfies $\overline\nabla_XN=X$ along $r=1$ and
\[
\overline\nabla_XY=\nabla^0_XY-g_0(X,Y)N.
\]
Since $\Omega$ is parallel, terms containing two copies of $N$ vanish and
\begin{equation}\label{eq:alpha-derivative}
(\nabla^0_X\alpha)(Y,Z)=\Omega(X,Y,Z).
\end{equation}
On three horizontal vectors, $\Omega$ vanishes because $\D$ has complex dimension $2$. Moreover,
$\Omega(\xi_0,Y,Z)=i\alpha(Y,Z)$. Therefore
\begin{equation}\label{eq:alpha-tangent}
\Omega|_{TM}=i\eta_0\wedge\alpha,
\qquad
(\nabla^0_X\alpha)(Y,Z)=i(\eta_0\wedge\alpha)(X,Y,Z).
\end{equation}
Taking real and imaginary parts yields \eqref{eq:SE-phi1}--\eqref{eq:SE-phi2}. Alternating gives
$\dd\alpha=3i\eta_0\wedge\alpha$, which is the middle pair of equations in \eqref{eq:SE-d}. Finally, $\overline\nabla I=0$ gives \eqref{eq:SE-xi}--\eqref{eq:SE-phi3}, and hence the remaining exterior equations.

\begin{proposition}\label{prop:global-SE}
Let $(M^5,\varphi_3,\xi_0,\eta_0,g_0)$ be a simply connected
Sasaki-Einstein manifold. Then there exist globally defined
$2$-forms $\omega_1,\omega_2$ on $M$ such that, setting
$\omega_3(X,Y)=g_0(\varphi_3X,Y)$,
the quadruple
\[
(\eta_0,\omega_1,\omega_2,\omega_3)
\]
defines a Sasaki-Einstein $\SU(2)$-structure satisfying
\eqref{eq:SE-d}.
\end{proposition}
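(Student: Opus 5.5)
The plan is to build the forms $\omega_1,\omega_2$ from a parallel complex volume form on the Kähler cone. Then the construction described just before the statement, \eqref{eq:alpha-cone}--\eqref{eq:alpha-tangent}, delivers all the required equations. So the whole task reduces to showing that the cone $(C(M),G_0,I)$ carries a global, nowhere vanishing, parallel $(3,0)$-form $\Omega$.

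Since $M$ is Sasaki-Einstein with $\Ric_{g_0}=4g_0$, the cone metric $G_0$ is Ricci-flat Kähler. Ricci-flatness means that the Chern connection induced on the canonical bundle $K=\Lambda^{3,0}$ is flat, because its curvature is the Ricci form. For a Kähler metric the Levi-Civita connection preserves $K$ and restricts to this flat connection. A flat line bundle with a unitary connection is determined by its holonomy representation $\pi_1(C(M))\to U(1)$. The cone $C(M)=M\times\R_{>0}$ deformation retracts onto $M$, so it is simply connected. Hence the holonomy is trivial, and $K$ admits a global parallel section $\Omega$. This section is nowhere vanishing, since a parallel section has constant norm. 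Rescaling $\Omega$ by a positive constant, and by a unit complex constant if needed, I would impose the normalization $\frac{i}{8}\Omega\wedge\overline\Omega=\operatorname{vol}_{G_0}$. This works because $\Omega\wedge\overline\Omega$ is parallel and hence a constant multiple of the volume form; the orientation sign is checked in a unitary frame.

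Next, on $r=1$ I would define $\alpha=\iota_N\Omega$, $\omega_1=\Rea\alpha$, $\omega_2=-\Ima\alpha$, and $\varphi_1,\varphi_2$ by $\omega_i(X,Y)=g_0(\varphi_iX,Y)$. The argument preceding the statement shows the following. The form $\alpha$ is horizontal, and pointwise in a unitary frame the normalization gives the quaternionic relations \eqref{eq:quaternion}, so $(\varphi_i,\xi_0,\eta_0,g_0)$ are almost contact metric structures with $\varphi_3$ the Sasakian one. Moreover \eqref{eq:alpha-tangent} yields $\dd\alpha=3i\eta_0\wedge\alpha$, which gives the middle two equations of \eqref{eq:SE-d}. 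The Sasakian structure gives $\dd\eta_0=-2\omega_3$ and $\dd\omega_3=0$.

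The main obstacle is the pointwise algebra: checking that the normalization and the sign conventions ($I=-\widetilde J$, and $\omega_2=-\Ima\alpha$ rather than $+$) produce exactly $\varphi_1\varphi_2=\varphi_3$ on $\D$ and not $-\varphi_3$. I would verify this in the frame $e_1,Ie_1,e_2,Ie_2$ with $\Omega=(e^1+iIe^1)\wedge(e^2+iIe^2)\wedge(N^\flat+iIN^\flat)$ up to phase. If the orientation comes out wrong, I would fix it by multiplying $\Omega$ by a unit constant or by swapping the sign choice for $\omega_2$, which the paper's definition fixes.
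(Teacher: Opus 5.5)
Your proposal is correct and follows the paper's proof essentially step by step. The steps are: flatness of the induced connection on $K_{C(M)}$ from Ricci-flatness, trivial holonomy from simple connectivity of $C(M)$, a normalized global parallel $\Omega$, and then the slice construction \eqref{eq:alpha-cone}--\eqref{eq:alpha-tangent}.

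One correction to your fallback. Replacing $\Omega$ by $e^{i\theta}\Omega$ only rotates $(\omega_1,\omega_2)$ by an element of $\mathrm{SO}(2)$ and leaves $\varphi_1\varphi_2$ unchanged, so it could never flip the sign. No fix is needed, however: the type condition $\alpha(IX,Y)=i\alpha(X,Y)$, together with $I|_{\D}=-\varphi_3$ and $\omega_2=-\Ima\alpha$, already forces $\varphi_2\varphi_3=\varphi_1$ on $\D$. The remaining quaternionic relations then follow from skew-adjointness and the normalization; compare the explicit check on $S^5$ in Section~\ref{sec:sphere}.
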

\begin{proof}
Let $(C(M),I,G_0)$ be the K\"ahler cone of the given
Sasaki-Einstein structure. Since $M$ is simply connected and
$\mathbb R_{>0}$ is contractible, $C(M)=M\times\mathbb R_{>0}$
is simply connected. Moreover, $G_0$ is Ricci-flat.
The connection induced on the canonical line bundle
\[
K_{C(M)}=\Lambda^{3,0}T^*C(M)
\]
has curvature proportional to the Ricci form and is therefore flat.
Since the cone is simply connected, this connection has trivial
holonomy. Consequently, there exists a nowhere vanishing global
parallel $(3,0)$-form $\Omega$. Multiplying $\Omega$ by a positive
constant, we choose the same algebraic normalization as in the
preceding construction.

Identify $M$ with the hypersurface $r=1$ and put $N=\partial_r$.
The construction in \eqref{eq:alpha-cone} now gives globally
defined forms
\[
\alpha(X,Y)=\Omega(N,X,Y),\qquad
\omega_1=\Rea\alpha,\qquad
\omega_2=-\Ima\alpha.
\]
Define the global endomorphisms $\varphi_1,\varphi_2$ by
\[
g_0(\varphi_iX,Y)=\omega_i(X,Y),\qquad i=1,2.
\]
The algebraic verification above shows that these endomorphisms,
together with the given $\varphi_3$, satisfy
\eqref{eq:quaternion} and define almost contact metric structures
with the same $\xi_0$, $\eta_0$ and $g_0$.

Finally, the identities \eqref{eq:alpha-derivative} and
\eqref{eq:alpha-tangent} apply to this global parallel form and
yield the differential equations \eqref{eq:SE-d}, as established
above. Hence
\[
(\eta_0,\omega_1,\omega_2,\omega_3;
g_0,\xi_0,\varphi_1,\varphi_2,\varphi_3)
\]
is a globally defined Sasaki-Einstein $\SU(2)$-structure
extending the original Sasakian structure.
\end{proof}

\begin{remark}
On a nonsimply connected Sasaki-Einstein manifold the canonical bundle of the cone can have nontrivial flat holonomy. Our construction requires a global Sasaki-Einstein $\SU(2)$-structure, not merely locally defined forms $\omega_1,\omega_2$. It applies without this global assumption on each open set where such forms are defined.
\end{remark}

Note that Cartan's formula and \eqref{eq:SE-d} give
\begin{equation}\label{eq:SE-Lie}
\Lie_{\xi_0}\omega_1=3\omega_2,\qquad
\Lie_{\xi_0}\omega_2=-3\omega_1,\qquad
\Lie_{\xi_0}\omega_3=0.
\end{equation}
Since $\xi_0$ is Killing, these identities also hold with $\omega_i$ replaced by $\varphi_i$. They describe the rotation of the supplementary endomorphisms along the characteristic flow.

\section{Constant rotations and 2-scale deformations}\label{sec:deformations}


Fix a Sasaki-Einstein $\SU(2)$-structure as in
Section~\ref{sec:SE}. Let $a,b>0$ and $c,s\in\R$ be constants satisfying $c^2+s^2=1$. Define
\begin{equation}\label{eq:ansatz}
\begin{aligned}
\eta&=b\eta_0,\qquad \xi=b^{-1}\xi_0,\\
g&=a g_0+(b^2-a)\eta_0\otimes\eta_0,
\qquad \varphi=c\varphi_3+s\varphi_1.
\end{aligned}
\end{equation}
The metric is $a g_0$ on $\D$ and gives $\xi_0$ squared length $b^2$. The quaternionic relations show that $\varphi^2=-\Id+\eta\otimes\xi$ and that $\varphi$ is orthogonal on $\D$. Thus \eqref{eq:ansatz} is almost contact metric. Its characteristic vector field $\xi$ is Killing because the initial characteristic flow preserves both $g_0$ and $\eta_0$.
Set $\widehat\omega_i=a\omega_i$ and define
\begin{equation}\label{eq:beta-gamma}
\chi=s\widehat\omega_1+c\widehat\omega_3=-\Phi,
\quad
\beta=c\widehat\omega_1-s\widehat\omega_3,
\quad
\gamma=\widehat\omega_2.
\end{equation}
The endomorphisms corresponding to $\beta$ and $\gamma$ are
\[
F_\beta=c\varphi_1-s\varphi_3,\qquad
F_\gamma=\varphi_2.
\]
Using \eqref{eq:quaternion}, we obtain
\begin{equation}\label{eq:PQ}
F_\beta\varphi=-F_\gamma,\qquad
F_\gamma\varphi=F_\beta.
\end{equation}
Therefore
\begin{equation}\label{eq:beta-types}
\beta(\varphi X,Y)=-\gamma(X,Y),\qquad
\gamma(\varphi X,Y)=\beta(X,Y),
\end{equation}
and $\beta+i\gamma$ has horizontal type $(2,0)$. The form $\chi$ is of type $(1,1)$.

\begin{theorem}\label{thm:ansatz}
The structure \eqref{eq:ansatz} is quasi-contact if and only if one of the following two alternative conditions holds:
\begin{enumerate}[label=\textup{(\roman*)},leftmargin=2.6em]
\item $s=0$, $c=1$ and $a=b$;
\item $s\neq0$, $a=\frac23b^2$ and $c=\frac23b$.
\end{enumerate}
In the first case the structure is Sasakian. In the second case necessarily $0<b<\frac32$, and the structure is not contact metric.
\end{theorem}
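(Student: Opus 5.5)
The plan is to apply Proposition~\ref{prop:horizontal-test} directly to the ansatz \eqref{eq:ansatz}. All forms involved are constant-coefficient combinations of $\eta_0,\omega_1,\omega_2,\omega_3$, so each of its conditions will reduce to algebraic equations in $a,b,c,s$. The first step is to rewrite everything in the adapted basis $\chi,\beta,\gamma$ of \eqref{eq:beta-gamma}. Inverting the rotation gives
\[
\widehat\omega_1=s\chi+c\beta,\qquad \widehat\omega_3=c\chi-s\beta .
\]
By \eqref{eq:SE-d}, $\dd\eta=b\,\dd\eta_0=-2b\omega_3=-\tfrac{2b}{a}\widehat\omega_3$, and $\Phi=-\chi$. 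Hence
\[
B=\dd\eta-2\Phi=2\Bigl(1-\tfrac{bc}{a}\Bigr)\chi+\tfrac{2bs}{a}\beta .
\]
Condition \eqref{eq:hor1} holds automatically, because $\iota_{\xi_0}\omega_3=0$.

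Next I would impose \eqref{eq:hor2}. The form $\chi=-\Phi$ is $\varphi$-invariant of type $(1,1)$ and nondegenerate on $\D$. By \eqref{eq:beta-types}, $\beta$ is $\varphi$-anti-invariant. So anti-invariance of $B$ forces the $\chi$-coefficient to vanish, which gives $a=bc$. With this, $B=\tfrac{2bs}{a}\beta$, and by \eqref{eq:beta-types}, $B_\varphi=B(\varphi\cdot,\cdot)=-\tfrac{2bs}{a}\gamma$.

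For \eqref{eq:hor3} I would compute $\dd\Phi=-a(s\,\dd\omega_1+c\,\dd\omega_3)=-3as\,\eta_0\wedge\omega_2=-\tfrac{3s}{b}\,\eta\wedge\gamma$, again using \eqref{eq:SE-d}. Consequently $\dd\Phi(\xi,X,Y)=-\tfrac{3s}{b}\gamma(X,Y)$. Moreover $\dd\Phi|_{\D}=0$, so the last condition of Proposition~\ref{prop:horizontal-test}, which in dimension $5$ reads $\dd\Phi|_{\D}=0$, holds for free. Comparing with $B_\varphi$ and using $\gamma\neq0$, condition \eqref{eq:hor3} becomes
\[
s\Bigl(\tfrac3b-\tfrac{2b}{a}\Bigr)=0 .
\]
The step needing the most care is this bookkeeping of signs and factors. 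It depends on the order convention $\omega_i(X,Y)=g_0(\varphi_iX,Y)$, on the relation $\Phi=-\chi$, and on the Spivak/Lee normalization of $\dd$. I would double-check it against the Sasakian case $s=0$, where everything must close up.

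Finally I would split into cases. If $s=0$, then $c=\pm1$, and $a=bc>0$ forces $c=1$ and $a=b$; this is case (i). In that case $B=0$, so the structure is contact metric. It is in fact a $\D$-homothetic deformation of the Sasakian structure $(\varphi_3,\xi_0,\eta_0,g_0)$, hence Sasakian; alternatively, one can check \eqref{eq:Sasakian} directly using \eqref{eq:SE-phi3}. If $s\neq0$, the two equations give $a=\tfrac23b^2$ and $c=a/b=\tfrac23b$; this is case (ii). Here $c>0$ and $c^2=1-s^2<1$, which yields $0<b<\tfrac32$. Conversely, every such choice satisfies all the conditions, so it is quasi-contact. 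It is not contact metric because $B=\tfrac{2bs}{a}\beta\neq0$.
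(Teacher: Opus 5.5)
Your proposal is correct and follows essentially the same route as the paper. You decompose $B=\dd\eta-2\Phi$ in the $\chi,\beta$ basis to turn \eqref{eq:hor2} into $a=bc$, and you use $\iota_\xi\dd\Phi=-\frac{3s}{b}\gamma$ to turn \eqref{eq:hor3} into $s\bigl(\frac3b-\frac{2b}{a}\bigr)=0$. The case split on $s$, the bound $0<b<\frac32$, the $\mathcal D$-homothetic/Sasakian identification in case (i), and strictness via $B=\frac{2bs}{a}\beta\neq0$ all match the paper's proof.
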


\begin{proof}
Equations \eqref{eq:SE-d} give
\begin{align}
\Phi&=-a(c\omega_3+s\omega_1),\label{eq:ansatz-Phi}\\
\dd\Phi&=-\frac{3s}{b}\eta\wedge\gamma,\label{eq:ansatz-dPhi}\\
B:=\dd\eta-2\Phi
&=2s\widehat\omega_1+2\left(c-\frac ba\right)\widehat\omega_3
=2\left(1-\frac{bc}{a}\right)\chi+\frac{2bs}{a}\beta.
\label{eq:ansatz-B}
\end{align}
The forms $\Phi$, $B$, $\chi$, $\beta$ and $\gamma$ are horizontal.
Since $\gamma$ is horizontal, \eqref{eq:ansatz-dPhi} gives
\[
\dd\Phi|_{\mathcal D}=0,\qquad
\iota_\xi\dd\Phi=-\frac{3s}{b}\gamma.
\]
Since $B$ is horizontal, condition \eqref{eq:hor1} is automatically
satisfied. Furthermore, because $\chi$ is $\varphi$-invariant whereas
$\beta$ is $\varphi$-anti-invariant, condition \eqref{eq:hor2} is
equivalent to
\begin{equation}\label{eq:ansatz-cond1}
\frac{bc}{a}=1.
\end{equation}
In particular, $c>0$. Under this condition, \eqref{eq:beta-types} gives
\[
B(\varphi X,Y)=-\frac{2bs}{a}\gamma(X,Y).
\]
Thus \eqref{eq:hor3} is equivalent to
\begin{equation}\label{eq:ansatz-cond2}
\frac{3s}{b}=\frac{2bs}{a}.
\end{equation}

If $s=0$, then $c=1$ by positivity and \eqref{eq:ansatz-cond1} gives $a=b$. This is a standard $D$-homothetic deformation; a direct connection verification is given below in Remark~\ref{rem:Dhom}.

If $s\neq0$, equation \eqref{eq:ansatz-cond2} yields $a=\frac23b^2$, and \eqref{eq:ansatz-cond1} yields $c=\frac23b$. Since $0<c<1$, one has $0<b<\frac32$. Conversely, these equalities imply all the conditions of Proposition~\ref{prop:horizontal-test}. Finally, \eqref{eq:ansatz-B} reduces to
$B=\frac{2bs}{a}\beta\neq0$, so the structure is not contact metric.
\end{proof}

\begin{remark}\label{rem:phase}
The theorem characterizes only the constant-coefficient class \eqref{eq:ansatz}; it is not a classification of all quasi-contact metric structures. A general constant rotation $c\varphi_3+u\varphi_1+v\varphi_2$, with $c^2+u^2+v^2=1$, reduces to the same calculation by a constant rotation of $(\varphi_1,\varphi_2)$. Such a rotation preserves \eqref{eq:SE-d}. When the characteristic flow is complete, its action \eqref{eq:SE-Lie} identifies these constant phase choices.
\end{remark}


\medskip

Now take
\begin{equation}\label{eq:parameters}
\lambda=\sqrt{1+\mu^2},\qquad
b=\frac{3}{2\lambda},\qquad
 a=\frac{3}{2\lambda^2},\qquad
c=\frac1\lambda,\qquad s=\frac\mu\lambda.
\end{equation}
Then \eqref{eq:ansatz} is precisely \eqref{eq:main-eta}--\eqref{eq:main-g}. For $\mu>0$, it belongs to the second condition of Theorem~\ref{thm:ansatz}. For $\mu=0$, it belongs to the first condition with $a=b=\frac32$.

The forms \eqref{eq:beta-gamma} now satisfy
\begin{equation}\label{eq:family-dforms}
\dd\eta_\mu-2\Phi_\mu=2\mu\beta,
\qquad
\dd\Phi_\mu=-2\mu\eta_\mu\wedge\gamma.
\end{equation}
The complex form
$\Psi=(\dd t+i\eta_\mu)\wedge(\beta+i\gamma)$
is of type $(3,0)$ for the cone almost complex structure. Substitution in \eqref{eq:cone-dF} gives the explicit formula
\begin{equation}\label{eq:family-cone-dF}
\dd\widetilde\Phi_\mu
=2\mu e^{-2t}\Rea\bigl((\dd t+i\eta_\mu)
\wedge(\beta+i\gamma)\bigr).
\end{equation}
This proves the quasi-contact condition directly from the definition. For $\mu>0$, this three-form is nonzero: its contraction with $T$ is $2\mu e^{-2t}\beta$. The cone is consequently not almost K\"ahler, and Lemma~\ref{lem:NK-cone} shows that it is not nearly K\"ahler either.

\begin{remark}
The family is smooth in the parameter $\mu$, including at $\mu=0$. It can equally be written using $\lambda>1$ and $\mu=\sqrt{\lambda^2-1}$; the parameter $\mu$ is preferable when discussing the Sasakian endpoint. Negative values of $\mu$ are also permitted and differ by the constant phase change $(\varphi_1,\varphi_2)\mapsto(-\varphi_1,-\varphi_2)$.
\end{remark}

\section{Covariant identities and curvature properties}\label{sec:covariant}

We now study the family constructed above from a covariant point of
view. After computing the Levi-Civita connection of the $2$-scale
deformation, we derive explicit differential identities for the
structure tensors. Besides providing an independent verification of the
quasi-contact condition, these formulas lead to the curvature and
completeness properties of the family.


We first compute the connection for arbitrary $a,b>0$ in \eqref{eq:ansatz}. Set
\[
k=\frac{b^2-a}{a}.
\]

\begin{lemma}\label{lem:deformed-LC}
The Levi-Civita connections of $g_0$ and $g$ are related by
\begin{equation}\label{eq:deformed-LC}
\nabla_XY=\nabla^0_XY-k\{\eta_0(X)\varphi_3Y+
\eta_0(Y)\varphi_3X\}.
\end{equation}
Moreover,
\begin{align}
\nabla_X\xi&=-\frac ba\varphi_3X,\label{eq:deformed-xi}\\
(\nabla_X\varphi_3)Y
&=\frac ba\{g(X,Y)\xi-\eta(Y)X\},\label{eq:deformed-phi3}\\
(\nabla_X\varphi_1)Y
&=(1-2k)\eta_0(X)\varphi_2Y
-(1+k)\eta_0(Y)\varphi_2X+\omega_2(X,Y)\xi_0,
\label{eq:deformed-phi1}\\
(\nabla_X\varphi_2)Y
&=-(1-2k)\eta_0(X)\varphi_1Y
+(1+k)\eta_0(Y)\varphi_1X-\omega_1(X,Y)\xi_0.
\label{eq:deformed-phi2}
\end{align}
\end{lemma}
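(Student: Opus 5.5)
The plan is to guess the difference tensor $D=\nabla-\nabla^0$ and verify it through the standard characterization of the Levi-Civita connection. We write $g=ag_0+(b^2-a)\eta_0\otimes\eta_0$, so $g=a(g_0+k\,\eta_0\otimes\eta_0)$. The constant factor $a$ does not affect the connection, and we may take $g=g_0+k\eta_0\otimes\eta_0$. Since $D(X,Y)$ is symmetric (both connections are torsion-free), it is determined by the Koszul-type identity
\[
2g(D(X,Y),Z)=(\nabla^0_Xg)(Y,Z)+(\nabla^0_Yg)(X,Z)-(\nabla^0_Zg)(X,Y).
\]
We have $\nabla^0 g=k(\nabla^0\eta_0\otimes\eta_0+\eta_0\otimes\nabla^0\eta_0)$. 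By \eqref{eq:SE-xi}, $(\nabla^0_X\eta_0)(Y)=g_0(-\varphi_3X,Y)=-\omega_3(X,Y)$.

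Substituting, the terms with $\omega_3$ pair up using skew-symmetry of $\omega_3$. We expect
\[
2g(D(X,Y),Z)=-2k\{\eta_0(X)\omega_3(Y,Z)+\eta_0(Y)\omega_3(X,Z)\}.
\]
The right side is $-2k\,g_0(\eta_0(X)\varphi_3Y+\eta_0(Y)\varphi_3X,Z)$. Because $\varphi_3$ takes values in $\D$, where $g=g_0$, it equals $-2k\,g(\eta_0(X)\varphi_3Y+\eta_0(Y)\varphi_3X,Z)$, which yields \eqref{eq:deformed-LC}. The one point needing care is the sign bookkeeping in the three terms: $\omega_3(Y,X)$ appears in two of them and must cancel against the term from $-\nabla^0_Z$.

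The remaining identities then follow by direct substitution. For \eqref{eq:deformed-xi}, we use $\xi=b^{-1}\xi_0$, $\varphi_3\xi_0=0$ and $\eta_0(\xi_0)=1$, which give
\[
\nabla_X\xi=b^{-1}\bigl(-\varphi_3X-k\varphi_3X\bigr)=-\frac{1+k}{b}\varphi_3X.
\]
Since $1+k=b^2/a$, this is $-\frac ba\varphi_3X$.

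For a tensor $F\in\{\varphi_1,\varphi_2,\varphi_3\}$ we compute
\[
(\nabla_XF)Y=(\nabla^0_XF)Y-k\bigl\{\eta_0(X)[\varphi_3,F]Y+\eta_0(FY)\varphi_3X-\eta_0(Y)F\varphi_3X\bigr\}.
\]
Here $\eta_0(FY)=0$. Evaluating the brackets uses the quaternionic relations \eqref{eq:quaternion}, extended by $\varphi_i\xi_0=0$:
\begin{itemize}
\item For $F=\varphi_3$, we get $[\varphi_3,\varphi_3]=0$ and $\varphi_3^2X=-X+\eta_0(X)\xi_0$. Combined with \eqref{eq:SE-phi3} this gives $(1+k)\{g_0(X,Y)\xi_0-\eta_0(Y)X\}+k\eta_0(X)\eta_0(Y)\xi_0$. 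Rewriting in terms of $g$, $\eta$ and $\xi$, with $g(X,Y)=a(g_0+k\eta_0\eta_0)$ and $1+k=b^2/a$, gives \eqref{eq:deformed-phi3}.
\item For $F=\varphi_1$, we get $[\varphi_3,\varphi_1]=2\varphi_2$ and $\varphi_1\varphi_3=-\varphi_2$. Together with \eqref{eq:SE-phi1} these produce the coefficients $1-2k$ and $-(1+k)$ in \eqref{eq:deformed-phi1}.
\item For $F=\varphi_2$, the analogous relations $[\varphi_3,\varphi_2]=-2\varphi_1$ and $\varphi_2\varphi_3=\varphi_1$, together with \eqref{eq:SE-phi2}, give \eqref{eq:deformed-phi2}.
\end{itemize}

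The main obstacle is purely the sign conventions: the cyclic relations $\varphi_\alpha\varphi_\beta=\varphi_\gamma$ and their reverses, and the $\omega_i$ argument order. I would double-check these on a unitary frame, or with $X=\xi_0$ to isolate the $\eta_0(X)$ coefficient.
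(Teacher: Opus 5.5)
Your proposal is correct and follows the paper's proof essentially step for step. Both arguments use the Koszul difference formula with $\nabla^0_X\eta_0=-\omega_3(X,\cdot)$, then insert $Y=\xi_0$ to get \eqref{eq:deformed-xi}. Both then apply the correction $(\nabla_XF)Y=(\nabla^0_XF)Y+D(X,FY)-F\,D(X,Y)$ and evaluate it with the quaternionic relations. Normalizing $a=1$ and writing the correction as a commutator are only cosmetic differences.

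\textbf{An algebra slip in the $\varphi_3$ case.} Your own correction formula gives $-k\eta_0(Y)X+k\eta_0(X)\eta_0(Y)\xi_0$. Adding \eqref{eq:SE-phi3} therefore yields $g_0(X,Y)\xi_0-(1+k)\eta_0(Y)X+k\eta_0(X)\eta_0(Y)\xi_0$, not $(1+k)\{g_0(X,Y)\xi_0-\eta_0(Y)X\}+k\eta_0(X)\eta_0(Y)\xi_0$. Your version carries an extra $k\,g_0(X,Y)\xi_0$ and would not reduce to \eqref{eq:deformed-phi3}. The corrected expression does reduce to it, because $\frac ba g(X,Y)\xi=(g_0(X,Y)+k\eta_0(X)\eta_0(Y))\xi_0$ and $\frac ba\eta(Y)X=(1+k)\eta_0(Y)X$.

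\textbf{The cancellation in the Koszul step.} The two $\eta_0(Z)$-terms coming from the first two summands cancel each other by skew-symmetry of $\omega_3$. The $-\nabla^0_Z g$ summand does not cancel anything: it doubles the $\eta_0(X)$- and $\eta_0(Y)$-terms. Your final expression $-2k\{\eta_0(X)\omega_3(Y,Z)+\eta_0(Y)\omega_3(X,Z)\}$ is nevertheless correct.
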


\begin{proof}
Write $C_XY=\nabla_XY-\nabla^0_XY$. Since
$\nabla^0_X\eta_0=-\omega_3(X,\cdot)$, differentiating the metric gives
\[
(\nabla^0_Xg)(Y,Z)=-(b^2-a)
\{\omega_3(X,Y)\eta_0(Z)+\eta_0(Y)\omega_3(X,Z)\}.
\]
The difference formula for two Levi-Civita connections reads
\begin{align*}
2g(C_XY,Z)
={}&(\nabla^0_Xg)(Y,Z)+(\nabla^0_Yg)(X,Z)
-(\nabla^0_Zg)(X,Y)\\
={}&-2(b^2-a)\{\eta_0(Y)\omega_3(X,Z)
+\eta_0(X)\omega_3(Y,Z)\}.
\end{align*}
Since $g=a g_0$ on $\D$, this is \eqref{eq:deformed-LC}. Inserting $Y=\xi_0$ gives
$\nabla_X\xi_0=-(1+k)\varphi_3X$ and hence \eqref{eq:deformed-xi}.
For any endomorphism $F$,
\[
(\nabla_XF)Y=(\nabla^0_XF)Y+C_X(FY)-F(C_XY).
\]
For $F=\varphi_3$, the correction is
\[
C_X(\varphi_3Y)-\varphi_3(C_XY)
=-k\eta_0(Y)X+k\eta_0(X)\eta_0(Y)\xi_0.
\]
Combining this with \eqref{eq:SE-phi3} and
$g/a=g_0+k\eta_0\otimes\eta_0$ gives \eqref{eq:deformed-phi3}.
For $F=\varphi_1$, the quaternionic relations give
\[
C_X(\varphi_1Y)-\varphi_1(C_XY)
=-2k\eta_0(X)\varphi_2Y-k\eta_0(Y)\varphi_2X.
\]
Together with \eqref{eq:SE-phi1}, this proves \eqref{eq:deformed-phi1}. Repeating the same substitution for $\varphi_2$ gives \eqref{eq:deformed-phi2}.
\end{proof}

\begin{remark}\label{rem:Dhom}
When $a=b$ and $\varphi=\varphi_3$, the coefficient $b/a$ in \eqref{eq:deformed-phi3} is one. Thus the first branch of Theorem~\ref{thm:ansatz} is Sasakian directly by \eqref{eq:Sasakian}.
\end{remark}


For the family \eqref{eq:parameters}, $k=\frac12$ and $b/a=\lambda$. Hence the connection is independent of $\mu$:
\begin{equation}\label{eq:family-LC}
\nabla_XY=\nabla^0_XY-\frac12
\{\eta_0(X)\varphi_3Y+\eta_0(Y)\varphi_3X\}.
\end{equation}
In the following calculations, we suppress the subscript $\mu$
when no confusion can arise.
Because $\Lie_{\xi_0}\varphi_1=3\varphi_2$ and $\Lie_{\xi_0}\varphi_3=0$, one has
\begin{equation}\label{eq:family-h-computation}
h=\frac12\Lie_{(2\lambda/3)\xi_0}
\left(\frac{\varphi_3+\mu\varphi_1}{\lambda}\right)
=\mu\varphi_2.
\end{equation}
In particular $h$ is skew-adjoint for $g$, annihilates $\xi$, anticommutes with $\varphi$, and satisfies \eqref{eq:main-h}.
Equations \eqref{eq:deformed-phi3}--\eqref{eq:deformed-phi2} simplify to
\begin{align}
(\nabla_X\varphi_3)Y
&=\lambda\{g(X,Y)\xi-\eta(Y)X\},\label{eq:family-phi3}\\
(\nabla_X\varphi_1)Y
&=\lambda\{g(\varphi_2X,Y)\xi-\eta(Y)\varphi_2X\},\label{eq:family-phi1}\\
(\nabla_X\varphi_2)Y
&=-\lambda\{g(\varphi_1X,Y)\xi-\eta(Y)\varphi_1X\}.
\label{eq:family-phi2}
\end{align}
For instance, in \eqref{eq:deformed-phi1} the first term vanishes, while
$\omega_2(X,Y)\xi_0=\lambda g(\varphi_2X,Y)\xi$ and
$\frac32\eta_0(Y)=\lambda\eta(Y)$.
Combining \eqref{eq:family-phi3} and \eqref{eq:family-phi1} proves
\begin{equation}\label{eq:fundamental-derivative}
(\nabla_X\varphi)Y
=g(X+hX,Y)\xi-\eta(Y)(X+hX),
\end{equation}
which is \eqref{eq:main-derivative}. Furthermore,
\begin{equation}\label{eq:family-nabla-xi}
\nabla_X\xi=-\lambda\varphi_3X=-\varphi X-\varphi hX,
\qquad
\nabla_\xi\varphi=0.
\end{equation}
The equality $\varphi(\Id+h)=\lambda\varphi_3$ follows immediately from \eqref{eq:quaternion} and $\lambda^2=1+\mu^2$.


\medskip

We now give a second verification of the quasi-contact condition,
working directly with the Levi-Civita connection of the cone rather
than with exterior forms.

Equations \eqref{eq:fundamental-derivative} and \eqref{eq:family-nabla-xi}, substituted into \eqref{eq:cone-nabla1}--\eqref{eq:cone-nabla2}, give
\begin{align}
(\widetilde\nabla_X\widetilde J)Y
&=g(hX,Y)\xi-\eta(Y)hX+g(\varphi hX,Y)T,
\label{eq:explicit-cone-nabla}\\
(\widetilde\nabla_X\widetilde J)T&=-\varphi hX,
\qquad \widetilde\nabla_T\widetilde J=0.
\label{eq:explicit-cone-nablaT}
\end{align}
These formulas give a covariant verification of quasi-K\"ahlerity without invoking Lemma~\ref{lem:QK-type} or an external characterization theorem.
Indeed, $h\varphi=-\varphi h$ and $\varphi h\varphi=h$ imply
\[
g(h\varphi X,\varphi Y)=-g(hX,Y),\qquad
g(\varphi h\varphi X,\varphi Y)=-g(\varphi hX,Y).
\]
Using $\widetilde JX=\varphi X-\eta(X)T$ and $\widetilde\nabla_T\widetilde J=0$, equations \eqref{eq:explicit-cone-nabla}--\eqref{eq:explicit-cone-nablaT} yield
\[
(\widetilde\nabla_{\widetilde JX}\widetilde J)\widetilde JY
=-g(hX,Y)\xi+\eta(Y)hX-g(\varphi hX,Y)T.
\]
This is the negative of \eqref{eq:explicit-cone-nabla}. For the second argument $T$, the two terms are $-\varphi hX$ and $\varphi hX$. Finally, for first argument $T$, both terms vanish because $h\xi=0$. Tensoriality proves \eqref{eq:QK} for arbitrary arguments.
For comparison with \cite[Theorem~2.1]{BJPS}, equation \eqref{eq:fundamental-derivative} also gives
\begin{equation}\label{eq:intrinsic-QC}
\begin{aligned}
(\nabla_X\varphi)Y+(\nabla_{\varphi X}\varphi)\varphi Y
={}&2g(X,Y)\xi-\eta(Y)X-\eta(X)\eta(Y)\xi-\eta(Y)hX.
\end{aligned}
\end{equation}
To see this, note that
\[
g(\varphi X+h\varphi X,\varphi Y)
=g(X,Y)-\eta(X)\eta(Y)-g(hX,Y)
\]
and apply \eqref{eq:fundamental-derivative} to the pair $(\varphi X,\varphi Y)$. Thus the examples satisfy exactly the intrinsic identity used in that paper as well.


\medskip

We now turn to the curvature of the deformed structures and to the
Sasakian structure naturally associated with the family.
Let $S(X)=\nabla_X\xi=-\lambda\varphi_3X$. Since $\lambda$ is constant, \eqref{eq:family-phi3} gives $\nabla_\xi S=0$, while
\[
S^2=-\lambda^2(\Id-\eta\otimes\xi).
\]
For a unit geodesic vector field, direct expansion of the curvature gives
$R(X,\xi)\xi=-(\nabla_\xi S)X-S^2X$. Hence
\begin{equation}\label{eq:family-Jacobi}
R(X,\xi)\xi=\lambda^2\{X-\eta(X)\xi\}.
\end{equation}
This proves \eqref{eq:main-Jacobi}. In particular, every $2$-plane containing $\xi$ has sectional curvature $\lambda^2=1+\mu^2$, and
\begin{equation}\label{eq:family-Ric}
\Ric(\xi,\xi)=4(1+\mu^2).
\end{equation}
The family also has an associated Sasakian structure. Namely,
\begin{equation}\label{eq:companion}
\eta_*:=\lambda\eta_\mu=\frac32\eta_0,
\qquad \xi_*:=\lambda^{-1}\xi_\mu=\frac23\xi_0,
\qquad g_*:=\lambda^2g_\mu=\frac32g_0+\frac34\eta_0^2
\end{equation}
are independent of $\mu$, and $(\varphi_3,\xi_*,\eta_*,g_*)$ is Sasakian by \eqref{eq:family-phi3}. Here and below $\eta_0^2$ abbreviates $\eta_0\otimes\eta_0$. 
Although the metrics $g_\mu$ are all homothetic, the corresponding
almost contact metric structures are pairwise non-isomorphic for
distinct values of $\mu\geq0$. Indeed,
\begin{equation}\label{eq:h-norm}
\|h_\mu\|_{\HS}^2=4\mu^2,
\end{equation}
and the pointwise Hilbert-Schmidt norm of
$h=\frac12\Lie_\xi\varphi$ is invariant under isomorphisms of almost
contact metric structures. Hence an isomorphism between the structures
corresponding to $\mu$ and $\nu$ would imply $\mu=\nu$.

\medskip

The eigenvalues of $g_\mu$ relative to $g_0$ are the positive constants
$\frac{3}{2\lambda^2}$ on $\D$ and $\frac{9}{4\lambda^2}$ on $\R\xi_0$. Thus
\[
\frac{3}{2\lambda^2}g_0\leq g_\mu\leq
\frac{9}{4\lambda^2}g_0.
\]
The two induced distance functions are uniformly equivalent. If $g_0$ is complete, any $g_\mu$-Cauchy sequence is $g_0$-Cauchy and converges, and the upper bound implies convergence in $g_\mu$ as well. Hence $g_\mu$ is complete. This proves the remaining assertion of Theorem~\ref{thm:main}.


\section{An explicit homogeneous example on the sphere}\label{sec:sphere}


Consider the unit sphere $S^5\subset\C^3$, with round metric $g_0$ and standard complex structure $I$. Let $N_p=p$ be the outward unit normal and put
\begin{equation}\label{eq:sphere-data}
\xi_0=IN,\qquad \eta_0=g_0(\xi_0,\cdot),\qquad
\varphi_3X=-IX-\eta_0(X)N.
\end{equation}
The last expression is tangent to the sphere and annihilates $\xi_0$. Define
\begin{equation}\label{eq:sphere-volume}
\Omega=\dd z_1\wedge\dd z_2\wedge\dd z_3,
\qquad \alpha_p(X,Y)=\Omega(N_p,X,Y),
\end{equation}
and let $\omega_1=\Rea\alpha$, $\omega_2=-\Ima\alpha$. Define $\varphi_1,\varphi_2$ by $g_0(\varphi_iX,Y)=\omega_i(X,Y)$.
All these tensors are invariant under the transitive $\SU(3)$-action on $S^5$. To verify their algebraic relations explicitly, take $p=(0,0,1)$. On $\D_p$, in the real coordinates $(x_1,y_1,x_2,y_2)$,
\begin{align}
\omega_1&=\dd x_1\wedge\dd x_2-\dd y_1\wedge\dd y_2,\label{eq:sphere-forms1}\\
\omega_2&=-\dd x_1\wedge\dd y_2-\dd y_1\wedge\dd x_2,\label{eq:sphere-forms2}\\
\omega_3&=-\dd x_1\wedge\dd y_1-\dd x_2\wedge\dd y_2.
\label{eq:sphere-forms3}
\end{align}
For example,
\[
\varphi_1\partial_{x_1}=\partial_{x_2},\qquad
\varphi_2\partial_{x_1}=-\partial_{y_2},\qquad
\varphi_3\partial_{x_1}=-\partial_{y_1}.
\]
The remaining basis vectors are treated in the same way, and \eqref{eq:quaternion} follows.

The Euclidean Gauss formula and parallelism of $\Omega$ give
\[
(\nabla^0_X\alpha)(Y,Z)=\Omega(X,Y,Z)
=i(\eta_0\wedge\alpha)(X,Y,Z),
\]
exactly as in \eqref{eq:alpha-tangent}. Alternatively, extending $N$ radially gives
$\dd\alpha=(\Lie_N\Omega)|_{TS^5}=3\Omega|_{TS^5}$.
These identities verify all the differential equations \eqref{eq:SE-d} and \eqref{eq:SE-xi}--\eqref{eq:SE-phi2} directly on the sphere.


Choose
\[
\lambda=\frac32,\qquad \mu=\frac{\sqrt5}{2}.
\]
The formulas of Theorem~\ref{thm:main} become
\begin{equation}\label{eq:sphere-final}
\eta=\eta_0,\qquad \xi=\xi_0,\qquad
 g=\frac23g_0+\frac13\eta_0^2,\qquad
\varphi=\frac23\varphi_3+\frac{\sqrt5}{3}\varphi_1.
\end{equation}
In this normalization the contact form and its Hopf field are unchanged. The characteristic flow is $p\mapsto e^{iu}p$ and preserves $g$. On the other hand,
\begin{equation}\label{eq:sphere-h-final}
h=\frac{\sqrt5}{2}\varphi_2,\qquad
h^2=-\frac54(\Id-\eta\otimes\xi),
\qquad \nabla_X\xi=-\frac32\varphi_3X.
\end{equation}
The fundamental form and the exterior derivative of the contact form are
\begin{equation}\label{eq:sphere-defect}
\Phi=-\frac49\omega_3-\frac{2\sqrt5}{9}\omega_1,
\qquad \dd\eta=-2\omega_3.
\end{equation}
Since $\omega_1$ and $\omega_3$ are linearly independent at every point, $\dd\eta$ and $\Phi$ are not even proportional. Thus failure of the contact metric equation cannot be attributed to a convention concerning a factor of two.

By \eqref{eq:family-Jacobi}, every $2$-plane containing $\xi$ has sectional curvature $9/4$.
The metric and all the structure tensors are $\SU(3)$-invariant. The structure is therefore homogeneous, while the full Hopf circle preserves the metric but rotates the additional endomorphisms. The metric is complete since the sphere is compact; alternatively, use the uniform comparison with $g_0$ proved above. This completes the direct verification of Corollary~\ref{cor:sphere-intro}.

\section{Quasi-contact structures with Killing characteristic field}\label{sec:Killing}

We now work in arbitrary dimension $2n+1$. All the statements in this section are pointwise or local and require no compactness assumption.


\begin{proposition}\label{prop:Killing-algebra}
On a quasi-contact metric manifold, $\xi$ is Killing if and only if $h$ is skew-adjoint. Under this condition, set $A=-\nabla\xi=\varphi(\Id+h)$. Then, on $\D$,
\begin{equation}\label{eq:A-algebra}
A^*=-A,\qquad A^2=-\Id+h^2,\qquad Ah=-hA.
\end{equation}
The structure is contact metric if and only if $h=0$.
\end{proposition}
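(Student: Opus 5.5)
The plan is to work entirely from Lemma~\ref{lem:h-identities}, which already gives $\nabla_\xi\xi=0$, $h\xi=0$, $\eta\circ h=0$, $h\varphi=-\varphi h$ and $\nabla_X\xi=-\varphi X-\varphi hX$. Set $A=-\nabla\xi=\varphi(\Id+h)$, so $AX=-\nabla_X\xi$. Recall that $\xi$ is Killing if and only if $\nabla\xi$ is skew-adjoint, i.e.\ $A^*=-A$.

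Since $\varphi$ is skew-adjoint, we have $A^*=(\Id+h^*)\varphi^*=-(\Id+h^*)\varphi$. Using that $h^*$ also anticommutes with $\varphi$ (as noted in the proof of Lemma~\ref{lem:h-identities}), this becomes
\[
A^*=-\varphi+\varphi h^*.
\]
Hence
\[
A+A^*=\varphi h+\varphi h^*=\varphi(h+h^*).
\]
Both $h$ and $h^*$ annihilate $\xi$ and take values in $\D$, and $\varphi$ is injective on $\D$. Therefore $A+A^*=0$ if and only if $h+h^*=0$. This proves the first equivalence, and $A^*=-A$ then holds on all of $TM$, in particular on $\D$.

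For the algebraic identities on $\D$, assume $h^*=-h$ and compute
\[
A^2=\varphi(\Id+h)\varphi(\Id+h)=\varphi\varphi(\Id-h)(\Id+h)=\varphi^2(\Id-h^2),
\]
using $h\varphi=-\varphi h$. On $\D$ we have $\varphi^2=-\Id$, and $h^2$ preserves $\D$, so $A^2=-\Id+h^2$ there. Similarly,
\[
Ah=\varphi h+\varphi h^2,\qquad hA=h\varphi+h\varphi h=-\varphi h-\varphi h^2=-Ah.
\]
The argument never uses anything beyond the anticommutation, so this step is routine.

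For the final claim, Lemma~\ref{lem:h-identities} says the structure is contact metric iff $h=h^*$. Under the Killing condition $h^*=-h$, this forces $h=-h$, i.e.\ $h=0$. Conversely, $h=0$ gives $h=h^*$ trivially.

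The only point needing care is the claim that $h^*$ anticommutes with $\varphi$. It follows by taking adjoints of $h\varphi=-\varphi h$ together with $\varphi^*=-\varphi$: this gives $-\varphi h^*=h^*\varphi$. The injectivity argument in the first equivalence also needs that $h+h^*$ vanishes on $\xi$ and has image in $\D$, which follows from $h\xi=0$ and $\eta\circ h=0$ together with their adjoint versions. No step is expected to be a serious obstacle.
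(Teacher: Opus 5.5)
Your proof is correct and essentially matches the paper's: the computations of $A^2$ and $Ah$ via $h\varphi=-\varphi h$, and the reduction of the contact metric condition to $h=0$ through Lemma~\ref{lem:h-identities}, are the same. The one minor difference is in the Killing equivalence. You get both directions at once from $A+A^*=\varphi(h+h^*)$ and the injectivity of $\varphi$ on $\D$, whereas the paper proves the forward direction by Lie-differentiating $g(\varphi X,Y)+g(X,\varphi Y)=0$ along the Killing field $\xi$ and uses the adjoint computation only for the converse.
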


\begin{proof}
If $\xi$ is Killing, Lie differentiation of
$g(\varphi X,Y)+g(X,\varphi Y)=0$ gives
$g(hX,Y)+g(X,hY)=0$. Conversely, if $h^*=-h$, then
\[
(\varphi h)^*=h^*\varphi^*=h\varphi=-\varphi h,
\]
so \eqref{eq:h-nabla-xi} makes $\nabla\xi$ skew-adjoint and $\xi$ is Killing. The first identity of \eqref{eq:A-algebra} follows. Since $h\varphi=-\varphi h$, on $\D$ we have
\[
A^2=\varphi(\Id+h)\varphi(\Id+h)
=(-\Id+h)(\Id+h)=-\Id+h^2.
\]
Similarly,
$Ah=\varphi h+\varphi h^2=-hA$.
Finally, Lemma~\ref{lem:h-identities} says that contact metric compatibility is equivalent to self-adjointness of $h$. Under the present skew-adjointness assumption this is equivalent to $h=0$.
\end{proof}

\begin{proposition}\label{prop:quaternionic-blocks}
At any point of a quasi-contact metric manifold with Killing $\xi$, the orthogonal decomposition of $\D$ into eigenspaces of $-h^2$ is invariant under $\varphi,h,A$. On an eigenspace $E_\rho$ with eigenvalue $\rho^2>0$, the endomorphisms
\[
J_1=\varphi|_{E_\rho},\qquad
J_2=\rho^{-1}h|_{E_\rho},\qquad J_3=J_1J_2
\]
are orthogonal complex structures satisfying the quaternionic relations. Hence $\dim_\R E_\rho$ is divisible by four. On this subspace,
\[
A^2=-(1+\rho^2)\Id.
\]
On $\ker h\cap\D$, one has $A=\varphi$.
\end{proposition}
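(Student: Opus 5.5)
The argument is pointwise linear algebra on $\D$, using only Proposition~\ref{prop:Killing-algebra} and the identities of Lemma~\ref{lem:h-identities}. Since $\xi$ is Killing, $h^*=-h$. Hence $-h^2=h^*h$ is self-adjoint and positive semidefinite. It preserves $\D$, because $h\xi=0$ and $\eta\circ h=0$, so $\D$ splits orthogonally into eigenspaces of $-h^2$ with eigenvalues $\rho^2\geq0$.

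\emph{Invariance.} From $h\varphi=-\varphi h$ we get $h^2\varphi=\varphi h^2$. Thus $\varphi$ commutes with $h^2$ and preserves each eigenspace. The same holds for $h$, which trivially commutes with $h^2$. Since $A=\varphi(\Id+h)$ is built from these two, $A$ preserves each eigenspace as well.

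\emph{The quaternionic block.} Fix $E_\rho$ with $\rho>0$.
\begin{itemize}
\item $J_1=\varphi|_{E_\rho}$ is orthogonal with $J_1^2=-\Id$, since $\varphi$ is an isometry of $\D$ squaring to $-\Id$ there.
\item $J_2=\rho^{-1}h$ is skew-adjoint with $J_2^2=\rho^{-2}h^2=-\Id$ on $E_\rho$. A skew-adjoint endomorphism with square $-\Id$ satisfies $J_2^*J_2=-J_2^2=\Id$, so $J_2$ is orthogonal.
\item $J_1J_2=-J_2J_1$ follows from anticommutation. Setting $J_3=J_1J_2$ gives
\[
J_3^2=J_1J_2J_1J_2=-J_1^2J_2^2=-\Id,
\]
and $J_3$ is orthogonal as a product of orthogonal maps.
\end{itemize}
The remaining cyclic relations $J_2J_3=J_1$ and $J_3J_1=J_2$ follow from anticommutation and $J_i^2=-\Id$. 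For instance,
\[
J_2J_3=J_2J_1J_2=-J_1J_2^2=J_1 .
\]
Thus $E_\rho$ is a module over the quaternions and $\dim_\R E_\rho\equiv0\pmod 4$.

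\emph{The formulas for $A$.} By \eqref{eq:A-algebra}, on $E_\rho$ we have
\[
A^2=-\Id+h^2=-(1+\rho^2)\Id .
\]
On $\ker h\cap\D$, the definition $A=\varphi(\Id+h)$ reduces immediately to $A=\varphi$.

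\emph{Main obstacle.} The only delicate point is the quaternionic structure itself: checking that $J_2$ is orthogonal, which uses skew-adjointness of $h$ together with $J_2^2=-\Id$, and that $J_1$ and $J_2$ anticommute, which comes from $h\varphi=-\varphi h$. Everything else is routine.
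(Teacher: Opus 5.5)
Your proof is correct and follows the paper's argument essentially step by step: nonnegativity and self-adjointness of $-h^2$, invariance via $h\varphi=-\varphi h$, orthogonality of the skew-adjoint complex structures $J_1,J_2$ and their anticommutation, and the formulas for $A$ from \eqref{eq:A-algebra}. The only difference is the dimension count, where the paper splits off explicit orthonormal blocks $\Span\{v,J_1v,J_2v,J_3v\}$ and passes to their orthogonal complements, whereas you use the fact that a module over the division algebra $\mathbb H$ is free; both arguments are valid.
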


\begin{proof}
The operator $-h^2$ is self-adjoint and nonnegative. Its commutation with $\varphi$ follows from $h\varphi=-\varphi h$, and its commutation with $A$ follows from \eqref{eq:A-algebra}. On $E_\rho$, both $J_1$ and $J_2$ square to $-\Id$ and anticommute; they are orthogonal because they are skew-adjoint complex structures. Their product is again skew-adjoint and squares to $-\Id$.

For a unit vector $v\in E_\rho$, the vectors
$v,J_1v,J_2v,J_3v$ are orthonormal. Their span is invariant under the three endomorphisms, and its orthogonal complement is invariant as well. Repeating the construction gives an orthogonal decomposition into $4$-dimensional blocks. The statements about $A^2$ and $\ker h$ follow from its definition and \eqref{eq:A-algebra}.
\end{proof}

This proposition concerns individual tangent spaces. It makes no assertion that the ranks of the eigenspaces are constant, or that an eigenframe can be chosen smoothly through an eigenvalue crossing. In particular, $\rank h$ is a multiple of four at every point. For $\mu>0$, the family constructed above realizes a single nonzero block, with $\rho=\mu$.

The argument only forces $\ker h_p\cap\mathcal D_p$ to be
$\varphi_p$-invariant, and hence even-dimensional. Therefore
$\dim\mathcal D$ need not be divisible by four. If
$h_p|_{\mathcal D_p}$ is injective, however, then
$\dim\mathcal D$ is divisible by four.


\begin{lemma}\label{lem:unit-Killing}
Let $\xi$ be a unit Killing vector field on a Riemannian manifold and put $S=\nabla\xi$. Then
\[
\nabla_\xi\xi=0,\qquad \nabla_\xi S=0,\qquad
R(X,\xi)\xi=-S^2X.
\]
\end{lemma}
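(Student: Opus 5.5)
The plan is to use the standard Kostant identity for Killing fields together with the unit-length condition. Throughout, write $S=\nabla\xi$, so $S_X=\nabla_X\xi$.

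\emph{Step 1: $\nabla_\xi\xi=0$.} Since $\xi$ is Killing, $g(\nabla_X\xi,Y)$ is skew in $X,Y$. Hence, for every $X$,
\[
g(\nabla_\xi\xi,X)=-g(\nabla_X\xi,\xi)=-\tfrac12 X\,g(\xi,\xi)=0,
\]
where the last equality uses $g(\xi,\xi)=1$. So $\nabla_\xi\xi=0$.

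\emph{Step 2: the Kostant identity.} For a Killing field,
\[
\nabla^2_{X,Y}\xi=-R(\xi,X)Y,
\]
with the sign fixed by the convention $R(X,Y)=[\nabla_X,\nabla_Y]-\nabla_{[X,Y]}$. I will derive it in the usual way. First differentiate the skew-symmetry of $S$, which shows that $g(\nabla^2_{X,Y}\xi,Z)$ is skew in $(Y,Z)$. Next, the Ricci identity gives
\[
\nabla^2_{X,Y}\xi-\nabla^2_{Y,X}\xi=R(X,Y)\xi.
\]
Combining these two facts with the first Bianchi identity, through the usual cyclic-sum trick, yields the formula.

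Specialising, $(\nabla_\xi S)Y=\nabla^2_{\xi,Y}\xi=-R(\xi,\xi)Y=0$. This gives $\nabla_\xi S=0$.

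\emph{Step 3: the Jacobi operator.} Expand directly:
\[
R(X,\xi)\xi=\nabla_X\nabla_\xi\xi-\nabla_\xi\nabla_X\xi-\nabla_{[X,\xi]}\xi.
\]
The first term vanishes by Step 1. Using torsion-freeness, $[X,\xi]=\nabla_X\xi-\nabla_\xi X=SX-\nabla_\xi X$. The remaining terms then become
\[
-\nabla_\xi(SX)-S(SX)+S(\nabla_\xi X)=-(\nabla_\xi S)X-S^2X.
\]
This is the formula already quoted before \eqref{eq:family-Jacobi}, valid for any unit geodesic field. With Step 2 it reduces to $R(X,\xi)\xi=-S^2X$.

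\emph{Main obstacle.} The only delicate point is getting the sign and convention of the Kostant identity right. It is harmless for this lemma, because only $R(\xi,\xi)=0$ is used, and that holds for any sign convention.
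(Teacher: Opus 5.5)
Your proof is correct. Your Steps 1 and 3 are exactly the paper's argument: skew-adjointness of $S$ plus unit length gives $\nabla_\xi\xi=0$, and the bracket expansion with $[X,\xi]=SX-\nabla_\xi X$ gives the Jacobi formula.

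The two proofs differ only in how they obtain $\nabla_\xi S=0$.

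The paper uses no curvature identities. The local flow of $\xi$ consists of isometries fixing $\xi$, so it preserves the Levi-Civita connection and therefore $S=\nabla\xi$. Hence $\Lie_\xi S=0$. Combined with the identity $(\Lie_\xi S)X=(\nabla_\xi S)X$, which holds for any vector field by torsion-freeness, this gives the claim in two lines.

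You use the Kostant identity $\nabla^2_{X,Y}\xi=-R(\xi,X)Y$ instead, and your sign is the right one for the stated convention. This costs the Ricci identity, pair symmetry and the first Bianchi identity. In return it is purely pointwise and yields more: $\nabla_XS=R(X,\xi)$ for every $X$, not just for $X=\xi$.

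With that identity in hand you could skip the bracket expansion altogether. Putting $Y=\xi$ gives $R(X,\xi)\xi=(\nabla_XS)\xi=\nabla_X(S\xi)-S^2X=-S^2X$, since $S\xi=\nabla_\xi\xi=0$. For this shortcut, unlike for your actual route, the sign of the Kostant identity does matter.

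In both arguments $\nabla_\xi S=0$ holds for any Killing field. Unit length is used only for the first and third claims.
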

\begin{proof}
Skew-adjointness of $S$ and constant length give
$g(S\xi,X)=-g(\xi,SX)=0$. The local flow of $\xi$ preserves both $g$ and $\xi$, and therefore also the Levi-Civita connection and $S=\nabla\xi$. Thus $\Lie_\xi S=0$. Directly,
\[
(\Lie_\xi S)X=(\nabla_\xi S)X-S(SX)+S(SX)
=(\nabla_\xi S)X.
\]
Finally, using $[X,\xi]=SX-\nabla_\xi X$ and $\nabla_\xi\xi=0$ in the definition of curvature gives
\[
R(X,\xi)\xi=-(\nabla_\xi S)X-S^2X=-S^2X.
\]
\end{proof}

We are now in a position to prove Theorem~\ref{thm:curvature-intro}.
The argument combines the algebraic identities for $h$ obtained above
with the curvature formula for a unit Killing vector field.

\begin{proof}[Proof of Theorem~\ref{thm:curvature-intro}]
Apply Lemma~\ref{lem:unit-Killing} and Proposition~\ref{prop:Killing-algebra}. On all of $TM$,
\[
R(X,\xi)\xi=-A^2X=X-\eta(X)\xi-h^2X.
\]
For a horizontal unit vector $X$, skew-adjointness of $h$ gives
\begin{equation}\label{eq:sectional-sharp}
K(X,\xi)=1+\|hX\|^2\geq1.
\end{equation}
Tracing over a horizontal orthonormal basis yields
\[
\Ric(\xi,\xi)=2n-\tr(h^2)
=2n+\|h\|_{\HS}^2.
\]
Equality $\Ric(\xi,\xi)=2n$ is therefore equivalent to $h=0$ at that point. The global K-contact criterion follows from Proposition~\ref{prop:Killing-algebra}. The multiplicity assertion is Proposition~\ref{prop:quaternionic-blocks}.
\end{proof}

\begin{corollary}\label{cor:corrected-rigidity}
For a quasi-contact metric manifold with Killing characteristic vector field, the following conditions are equivalent:
\begin{enumerate}[label=\textup{(\roman*)},leftmargin=2.6em]
\item the structure is K-contact;
\item $\Lie_\xi\varphi=0$;
\item $\Ric(\xi,\xi)=2n$;
\item every $2$-plane containing $\xi$ has sectional curvature $1$.
\end{enumerate}
It is enough in \textup{(iv)} to require that all these sectional curvatures be at most $1$.
\end{corollary}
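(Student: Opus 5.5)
The plan is to prove the cycle (i)$\Rightarrow$(ii)$\Rightarrow$(iv)$\Rightarrow$(iii)$\Rightarrow$(i), and to prove the sharpened form of (iv) separately. Throughout, $\xi$ is assumed Killing. By Proposition~\ref{prop:Killing-algebra}, the structure is then contact metric exactly when $h=0$. Since $\xi$ is already Killing, K-contact means the same as contact metric. Hence (i) is equivalent to $h=0$ on all of $M$, and (ii) is literally $h=0$ because $h=\frac12\Lie_\xi\varphi$. This settles (i)$\Leftrightarrow$(ii).

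For (ii)$\Rightarrow$(iv), I use the formula $R(X,\xi)\xi=X-\eta(X)\xi-h^2X$ from Theorem~\ref{thm:curvature-intro}. Take a $2$-plane containing $\xi$ and choose its second unit vector $X$ orthogonal to $\xi$, so that $X\in\D$. Then \eqref{eq:sectional-sharp} gives
\[
K(X,\xi)=g(R(X,\xi)\xi,X)=1+\|hX\|^2 .
\]
When $h=0$ this equals $1$. For (iv)$\Rightarrow$(iii), sum these sectional curvatures over a horizontal orthonormal basis $e_1,\dots,e_{2n}$: this gives $\Ric(\xi,\xi)=\sum_j K(e_j,\xi)=2n$. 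For (iii)$\Rightarrow$(i), Theorem~\ref{thm:curvature-intro} gives $\Ric(\xi,\xi)=2n+\|h\|_{\HS}^2$ at every point, so $\Ric(\xi,\xi)=2n$ everywhere forces $h=0$ everywhere, which is (i) by the first paragraph.

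For the final sentence, suppose only that $K(X,\xi)\le1$ for every plane containing $\xi$. Combined with $K(X,\xi)=1+\|hX\|^2\ge1$, this gives $\|hX\|^2=0$ for every horizontal unit vector $X$. Together with $h\xi=0$ from Lemma~\ref{lem:h-identities}, we get $h=0$, hence (ii). I expect no step to be a real obstacle. The one point needing care is that \eqref{eq:sectional-sharp} relies on the Killing hypothesis through $h^*=-h$, which is what turns $-g(h^2X,X)$ into $\|hX\|^2$; the Killing assumption must therefore be invoked there explicitly, via Proposition~\ref{prop:Killing-algebra}.
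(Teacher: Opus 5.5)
Your proof is correct and follows essentially the same route as the paper. The paper's proof simply cites $\Lie_\xi\varphi=2h$, Proposition~\ref{prop:Killing-algebra} (under the Killing hypothesis, contact metric iff $h=0$) and the identity $K(X,\xi)=1+\|hX\|^2\geq1$ from \eqref{eq:sectional-sharp}; you have spelled out the cycle of implications, including the horizontal trace that gives $\Ric(\xi,\xi)=2n+\|h\|_{\HS}^2$.
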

\begin{proof}
Use $\Lie_\xi\varphi=2h$, Proposition~\ref{prop:Killing-algebra}, and \eqref{eq:sectional-sharp}.
\end{proof}

The constructed family has $K(X,\xi)=1+\mu^2$ on every characteristic plane and $\Ric(\xi,\xi)=4+4\mu^2$. It demonstrates that the lower bound in \eqref{eq:sectional-sharp} is sharp but need not be attained. In particular, the Killing condition does not force preservation of $\varphi$.

\section{Products of cones and examples in higher dimensions}\label{sec:joins}

The $5$-dimensional examples give strict structures in every higher odd dimension by taking products of cones. We give the induced tensors explicitly, including the characteristic field and its Lie derivative tensor. The resulting examples still have Killing characteristic vector field, but their characteristic Jacobi operators have two distinct eigenvalues on the horizontal bundle.


\medskip

Write $E=r\partial_r$ for the Euler field of a Riemannian cone $(C(L),G)=(L\times\R_{>0},\dd r^2+r^2g_L)$. An almost Hermitian structure $J$ is called dilation-invariant if $\Lie_EJ=0$. The cone structure associated with any almost contact metric structure has this property. In radial coordinates, our sign convention is
\begin{equation}\label{eq:radial-cone-J}
JX=\varphi X+\eta(X)E,\qquad JE=-\xi.
\end{equation}

\begin{lemma}\label{lem:homogeneous-cone-link}
Let $J$ be a dilation-invariant almost complex structure on $C(L)$
which is compatible with the cone metric $G$, that is,
$G(JU,JV)=G(U,V)$
for all vector fields $U,V$ on $C(L)$. On the slice $r=1$, define
\[
\xi=-JE,\qquad \eta(X)=g_L(\xi,X),\qquad
\varphi X=JX-\eta(X)E.
\]
These tensors give an almost contact metric structure on $L$, whose cone is the given $(C(L),J,G)$. The structure on $L$ is quasi-contact metric precisely when $(J,G)$ is quasi-K\"ahler, and contact metric precisely when $(J,G)$ is almost K\"ahler.
\end{lemma}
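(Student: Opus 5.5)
The plan is to reduce everything to the cone conventions of Section~\ref{sec:cones} by an explicit change of variables. Then the quasi-K\"ahler and almost K\"ahler statements follow from the definition and from \eqref{eq:cone-dF}.

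\textbf{Step 1: the algebraic structure on $L$.} Since $J$ is $G$-orthogonal and $G(E,E)=r^2=1$ on the slice, $\xi=-JE$ is a unit vector. It is orthogonal to $E$, because $G(JE,E)=0$ by skew-adjointness of $J$. Hence $\xi$ is tangent to $L$ and $\eta(\xi)=1$.

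For $X$ tangent to $L$, we have $G(JX,E)=-G(X,JE)=g_L(X,\xi)=\eta(X)$, using $G=g_L$ on the slice. So the $E$-component of $JX$ is $\eta(X)E$, and $\varphi X=JX-\eta(X)E$ is tangent to $L$. This gives \eqref{eq:radial-cone-J}.

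Next, $\varphi\xi=J\xi-E=J(-JE)-E=0$. Expanding $J^2X=-X$ gives $\varphi^2X+\eta(\varphi X)E-\eta(X)\xi=-X$. Comparing components yields $\eta\circ\varphi=0$ and $\varphi^2=-\Id+\eta\otimes\xi$.

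The metric identity in \eqref{eq:ac-metric} follows from $G(JX,JY)=G(X,Y)$ after splitting into tangential and $E$-components.

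\textbf{Step 2: extending off the slice.} Dilation invariance $\Lie_EJ=0$ extends these formulas to all $r$. Indeed, the flow of $E$ is $(p,r)\mapsto(p,e^sr)$, which commutes with tangent lifts of vectors on $L$ and fixes $E$. So $JX=\varphi X+\eta(X)E$ and $JE=-\xi$ hold everywhere, with $\varphi,\xi,\eta$ pulled back from $L$.

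With $r=e^{-t}$ we have $\partial_t=-E$. Therefore $JX=\varphi X-\eta(X)\partial_t$ and $J\partial_t=\xi$, and $G=\dd r^2+r^2g_L=e^{-2t}(g_L+\dd t^2)$. This is exactly \eqref{eq:cone}, so the given cone is the cone of $(\varphi,\xi,\eta,g_L)$.

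\textbf{Step 3: the two equivalences.} Quasi-contact metric is defined as quasi-K\"ahlerity of this cone, so the first equivalence is immediate. For the second, the fundamental form is $e^{-2t}(\Phi+\eta\wedge\dd t)$ by \eqref{eq:cone-F}. By \eqref{eq:cone-dF}, this form is closed iff $\dd\eta=2\Phi$, which is the contact metric condition.

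\textbf{Main obstacle.} The only delicate point is the sign bookkeeping between $E$ and $\partial_t$, and between the orientation of the normal and the fundamental form convention $F(U,V)=G(U,JV)$. I would check on a single tangent vector that $\widetilde\Phi$ here coincides with \eqref{eq:cone-F}, not with its negative. A sign error there would not affect closedness anyway.
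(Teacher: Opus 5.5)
Your proposal is correct and follows the same route as the paper. Both arguments derive the almost contact metric identities on the slice $r=1$ from $J^2=-\Id$ and $G$-compatibility, extend them to \eqref{eq:radial-cone-J} by dilation invariance, and read off the two equivalences from the definition and \eqref{eq:cone-dF}; your explicit substitution $\partial_t=-E$, which matches \eqref{eq:cone}, simply spells out a step the paper leaves implicit.
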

\begin{proof}
On the slice, $E$ is a unit normal. The compatibility of $J$ with the cone metric $G$
shows that $\xi=-JE$ is tangent and unit, and that
\[
G(JX,E)=-G(X,JE)=\eta(X).
\]
Thus, $\varphi X$ is tangent. The equalities $J^2=-\Id$ and $J\xi=E$ give
\[
\varphi\xi=0,\qquad \eta\circ\varphi=0,\qquad
\varphi^2=-\Id+\eta\otimes\xi.
\]
Taking inner products yields the almost contact metric compatibility identity. Dilation invariance extends these equalities from $r=1$ to \eqref{eq:radial-cone-J} on the whole cone. The two equivalences now follow from the definition and \eqref{eq:cone-dF}.
\end{proof}

\begin{lemma}\label{lem:product-QK}
The product of two almost Hermitian manifolds is quasi-K\"ahler if and only if each factor is quasi-K\"ahler. It is almost K\"ahler if and only if each factor is almost K\"ahler.
\end{lemma}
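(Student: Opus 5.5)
The plan is to prove both statements at once by working with the tensor $\mathcal A(U,V,W)=(\nabla_UF)(V,W)$ for the product $(N_1\times N_2,J_1\oplus J_2,G_1\oplus G_2)$, with fundamental form $F=F_1+F_2$. The first step is to record the structure of the product Levi-Civita connection. We have $\nabla=\nabla^1\oplus\nabla^2$, so for $U=U_1+U_2$ and $V=V_1+V_2$ split into their components tangent to the two factors,
\[
(\nabla_UJ)V=(\nabla^1_{U_1}J_1)V_1+(\nabla^2_{U_2}J_2)V_2 .
\]
This holds because $J$ preserves the splitting. In particular, mixed terms such as $(\nabla_{U_1}J)V_2$ vanish: $\nabla_{U_1}$ acts trivially on vector fields lifted from the second factor, and both $J_2$ and $\nabla^2$ depend only on the second factor.

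For the quasi-K\"ahler statement, I would substitute this decomposition into \eqref{eq:QK}. Since $JU=J_1U_1+J_2U_2$ and $JV=J_1V_1+J_2V_2$, the left-hand side becomes
\[
\bigl\{(\nabla^1_{U_1}J_1)V_1+(\nabla^1_{J_1U_1}J_1)J_1V_1\bigr\}
+\bigl\{(\nabla^2_{U_2}J_2)V_2+(\nabla^2_{J_2U_2}J_2)J_2V_2\bigr\}.
\]
The two braces are tangent to different factors, so the sum vanishes if and only if each brace vanishes. For the ``only if'' direction, I would take $U,V$ lifted from a single factor. This shows that each factor's quasi-K\"ahler expression vanishes at every point of the product, and hence on the factor itself.

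For the almost K\"ahler statement, note that $F=\pi_1^*F_1+\pi_2^*F_2$, so
\[
\dd F=\pi_1^*\dd F_1+\pi_2^*\dd F_2 .
\]
These two pullbacks live in the complementary summands $\Lambda^3T^*N_1$ and $\Lambda^3T^*N_2$ of $\Lambda^3T^*(N_1\times N_2)$. Therefore $\dd F=0$ if and only if $\dd F_1=0$ and $\dd F_2=0$, since a pullback under a projection vanishes exactly when the original form does.

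I do not expect a real obstacle here. The only point needing care is the justification of the vanishing of the mixed covariant derivatives. I would argue this by evaluating on local product frames, where the Christoffel symbols of the product metric have no mixed components. Alternatively, the first statement can be obtained from Lemma~\ref{lem:QK-type}: $\dd F$ splits into pullbacks as above, and the type decomposition of the product is compatible with this splitting, so $\dd F$ is of type $(3,0)+(0,3)$ if and only if both $\dd F_1$ and $\dd F_2$ are. I would give this as a second verification.
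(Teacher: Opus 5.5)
Your proposal is correct and follows essentially the same route as the paper. The splitting of the product Levi-Civita connection makes the quasi-K\"ahler expression the pair of the factor expressions, with necessity obtained from arguments tangent to one factor, and $\dd F=\pi_1^*\dd F_1+\pi_2^*\dd F_2$, with the two pullbacks lying in independent summands, settles the almost K\"ahler case.
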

\begin{proof}
For the product metric and product almost complex structure, the Levi-Civita connection splits. Hence the quasi-K\"ahler expression in \eqref{eq:QK} is the ordered pair of the corresponding expressions on the factors. Necessity follows by choosing arguments tangent to one factor. If $F_i$ are the factor fundamental forms and $\pi_i$ the projections, the product form is $F=\pi_1^*F_1+\pi_2^*F_2$. Therefore
\[
\dd F=\pi_1^*\dd F_1+\pi_2^*\dd F_2.
\]
Restriction to either factor shows that this is zero precisely when both summands are zero.
\end{proof}


\medskip

We now describe explicitly the almost contact metric structure induced
on the link of the product of two cones. We use the metric identification in \cite[Example~3.3]{ACGL}
and determine the almost contact metric structure induced by the product almost complex structure.  
Let $(M_i,\varphi^{(i)},\xi_i,\eta_i,g_i)$, $i=1,2$, be almost contact metric manifolds. The superscripts on $\varphi^{(i)}$ label the two factors and must not be confused with the quaternionic indices used earlier.

The hypersurface
\[
L=\{r_1^2+r_2^2=1\}
\subset C(M_1)\times C(M_2),
\qquad r_1,r_2>0,
\]
is the link of the product cone. 
Set
\[
f(\theta)=\cos\theta,\qquad s(\theta)=\sin\theta.
\]
Writing
$r_1=\cos\theta$ and $r_2=\sin\theta$ identifies $L$ with
\[
M_1\times M_2\times(0,\pi/2),
\]
equipped with the metric
\begin{equation}\label{eq:join-metric}
g_L=\dd\theta^2+f^2g_1+s^2g_2.
\end{equation}
All factor tensors below are pulled back to $L$. 
We shall refer to this Riemannian manifold as the
\emph{open spherical join} of the two factors.

Put
\begin{equation}\label{eq:join-xi-eta}
\xi_L=\xi_1+\xi_2,\qquad
\eta_L=f^2\eta_1+s^2\eta_2,\qquad
Z=\tan\theta\,\xi_1-\cot\theta\,\xi_2.
\end{equation}
For $X_i\in TM_i$ and $u\in\R$, define
\begin{equation}\label{eq:join-phi}
\begin{split}
\varphi_L(X_1+X_2+u\partial_\theta)
={}&\varphi^{(1)}X_1+\varphi^{(2)}X_2+uZ+fs\{\eta_2(X_2)-\eta_1(X_1)\}\partial_\theta.
\end{split}
\end{equation}
The fields $\partial_\theta$ and $Z$ are orthonormal, horizontal for $\eta_L$, and satisfy
\[
\varphi_L\partial_\theta=Z,\qquad \varphi_LZ=-\partial_\theta.
\]
In particular, with $\D_i=\ker\eta_i$, the horizontal bundle splits orthogonally as
\begin{equation}\label{eq:join-horizontal}
\ker\eta_L=\D_1\oplus\D_2\oplus
\Span\{\partial_\theta,Z\}.
\end{equation}

\begin{theorem}\label{thm:open-join}
The tensors \eqref{eq:join-metric}--\eqref{eq:join-phi} define an almost contact metric structure on $L$. This structure is quasi-contact metric if and only if both factors are quasi-contact metric. It is contact metric if and only if both factors are contact metric. In particular, 
if both factors are quasi-contact metric and $h_i=\frac12\Lie_{\xi_i}\varphi^{(i)}$, then
\begin{equation}\label{eq:join-h}
h_L(X_1+X_2+u\partial_\theta)=h_1X_1+h_2X_2.
\end{equation}
Moreover, $\xi_L$ is Killing if and only if both $\xi_i$ are Killing. The metric $g_L$ is geodesically incomplete.
\end{theorem}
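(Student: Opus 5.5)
Identifying $L$ with the slice $\rho=1$ of the product cone $C(M_1)\times C(M_2)$ lets most of the statement follow from the lemmas already proved. With $r_1=\rho\cos\theta$ and $r_2=\rho\sin\theta$, the product metric $\dd r_1^2+r_1^2g_1+\dd r_2^2+r_2^2g_2$ becomes $\dd\rho^2+\rho^2 g_L$ with $g_L$ as in \eqref{eq:join-metric}; this is a direct substitution. The product almost complex structure $J=J_1\oplus J_2$, each $J_i$ given by \eqref{eq:radial-cone-J}, is compatible with the product metric. It is dilation-invariant for the total Euler field $E=E_1+E_2$, because $\Lie_{E_i}J_i=0$ and $E_j$ acts trivially on the other factor. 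Lemma~\ref{lem:homogeneous-cone-link} then produces an almost contact metric structure on $L$ whose cone is the product cone. Lemma~\ref{lem:product-QK} gives the two equivalences at once: quasi-contact metric iff each cone is quasi-K\"ahler iff each factor is quasi-contact metric, and likewise contact metric iff almost K\"ahler.

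It remains to check that the structure produced by Lemma~\ref{lem:homogeneous-cone-link} is exactly \eqref{eq:join-xi-eta}--\eqref{eq:join-phi}. On the slice $\rho=1$,
\[
\xi_L=-JE=-J_1E_1-J_2E_2=\xi_1+\xi_2 .
\]
The form $\eta_L=g_L(\xi_L,\cdot)$ is then $f^2\eta_1+s^2\eta_2$. For $\varphi_L$, express $\partial_\theta$ in cone terms and compute $J\partial_\theta$:
\[
\partial_\theta=-\tan\theta\,E_1+\cot\theta\,E_2,\qquad
J\partial_\theta=\tan\theta\,\xi_1-\cot\theta\,\xi_2=Z .
\]
Since $\eta_L(Z)=fs-sf=0$, this gives $\varphi_L\partial_\theta=Z$. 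For $X_1\in TM_1$, we have $JX_1=\varphi^{(1)}X_1+\eta_1(X_1)E_1$. Decomposing $E_1=f^2E-fs\,\partial_\theta$ and subtracting $\eta_L(X_1)E=f^2\eta_1(X_1)E$ leaves $\varphi^{(1)}X_1-fs\,\eta_1(X_1)\partial_\theta$. The computation for $X_2$ is the same, with the opposite sign in front of $\partial_\theta$. This is \eqref{eq:join-phi}, and orthonormality of $\partial_\theta,Z$ follows from $g_L$.

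For \eqref{eq:join-h}, compute $\Lie_{\xi_L}\varphi_L$ directly from \eqref{eq:join-phi}. Each $\xi_i$ commutes with $\partial_\theta$, with the other factor's fields, and with functions of $\theta$. It preserves $\eta_i$ by \eqref{eq:hor1} and Cartan's formula, and it preserves $\xi_i$. Hence $\Lie_{\xi_L}Z=0$, and every term vanishes except $\Lie_{\xi_i}\varphi^{(i)}=2h_i$. For the Killing statement, write $\Lie_{\xi_L}g_L=f^2\Lie_{\xi_1}g_1+s^2\Lie_{\xi_2}g_2$, using $\xi_i(\theta)=0$. This vanishes iff both summands vanish, since they live on different factors.

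For incompleteness, the curve $\theta\mapsto(p_1,p_2,\theta)$ is a unit-speed geodesic of $g_L$: it is a fibre of the warped product over the interval and the base metric is $\dd\theta^2$. It leaves every compact set in finite length as $\theta\to0$ or $\pi/2$. The main obstacle is the bookkeeping of signs and factors $fs$ in identifying $\varphi_L$, especially expressing $E_i$ in terms of $E$ and $\partial_\theta$ on the slice. Everything else reduces to the earlier lemmas.
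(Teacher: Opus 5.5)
Your proposal is correct and follows the paper's proof essentially step by step. You realize $L$ as the $\rho=1$ slice of the product cone, apply Lemma~\ref{lem:homogeneous-cone-link} and Lemma~\ref{lem:product-QK} for the two equivalences, identify $\varphi_L$ through $E_1=f^2E-fs\,\partial_\theta$ and $E_2=s^2E+fs\,\partial_\theta$, and then compute $\Lie_{\xi_L}\varphi_L$, $\Lie_{\xi_L}g_L$ and the angular geodesic, with your explicit derivation of $J\partial_\theta=Z$ filling in a step the paper only asserts. One minor slip in the justification: $\varphi_L\partial_\theta=J\partial_\theta=Z$ holds because $\eta_L(\partial_\theta)=0$ (equivalently, because $Z$ is tangent to $L$), whereas the identity $\eta_L(Z)=0$ that you cite is what shows $Z$ is horizontal.
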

\begin{proof}
Let $r_i$ be the radial coordinates of the two cones and set
\[
r_1=\rho\cos\theta,\qquad r_2=\rho\sin\theta.
\]
This is a diffeomorphism from $\R_{>0}\times(0,\pi/2)$ onto the positive quadrant. A direct calculation gives
\begin{equation}\label{eq:product-cone-metric}
\dd r_1^2+r_1^2g_1+\dd r_2^2+r_2^2g_2
=\dd\rho^2+\rho^2g_L.
\end{equation}
The product almost complex structure $J=J_1\oplus J_2$ is invariant under the simultaneous dilations generated by
\[
E=\rho\partial_\rho=E_1+E_2,
\qquad E_i=r_i\partial_{r_i}.
\]
Since $J_iE_i=-\xi_i$, Lemma~\ref{lem:homogeneous-cone-link} gives $\xi_L=\xi_1+\xi_2$, and the metric gives the displayed formula for $\eta_L$.
To compute the tangential endomorphism, observe that
\[
E_1=f^2E-fs\partial_\theta,\qquad
E_2=s^2E+fs\partial_\theta.
\]
The factor formula $J_iX_i=\varphi^{(i)}X_i+\eta_i(X_i)E_i$ therefore yields
\[
J(X_1+X_2)=\varphi^{(1)}X_1+\varphi^{(2)}X_2
+fs\{\eta_2(X_2)-\eta_1(X_1)\}\partial_\theta
+\eta_L(X_1+X_2)E.
\]
Also $J\partial_\theta=Z$. Taking the tangential part proves \eqref{eq:join-phi}. The almost contact metric identities and the two equivalences now follow from Lemmas~\ref{lem:homogeneous-cone-link} and \ref{lem:product-QK}.
For the Lie derivative tensor, quasi-contact geometry gives $\Lie_{\xi_i}\eta_i=0$. The lifted fields commute with the other factor and with $\partial_\theta$, so that
\[
[\xi_L,\partial_\theta]=[\xi_L,Z]=0,
\qquad \xi_L(\theta)=0.
\]
Taking the Lie derivative of \eqref{eq:join-phi} thus leaves only $\Lie_{\xi_i}\varphi^{(i)}=2h_i$, proving \eqref{eq:join-h}. Furthermore,
\[
\Lie_{\xi_L}g_L=f^2\Lie_{\xi_1}g_1+s^2\Lie_{\xi_2}g_2.
\]
The summands act on different factor tangent spaces, so that their sum vanishes exactly when both vanish.
Finally, fixing points $p_i\in M_i$ gives a curve $\theta\mapsto(p_1,p_2,\theta)$ with unit speed. The Koszul formula gives $\nabla^L_{\partial_\theta}\partial_\theta=0$, so this curve is a geodesic. It reaches either excluded endpoint of $(0,\pi/2)$ in finite parameter time and has no limit in $L$. Thus $g_L$ is geodesically incomplete.
\end{proof}


The proof of Theorem~\ref{thm:open-join} uses the almost Hermitian
geometry of the product of the corresponding cones. We now verify
the contact metric condition directly on the open spherical join by
computing 
$\dd\eta_L-2\Phi_L$.

Write $\Phi_i=g_i(\cdot,\varphi^{(i)}\cdot)$ and $B_i=\dd\eta_i-2\Phi_i$. From \eqref{eq:join-phi},
\begin{equation}\label{eq:join-Phi}
\Phi_L=f^2\Phi_1+s^2\Phi_2
+fs\,\dd\theta\wedge(\eta_2-\eta_1).
\end{equation}
Differentiating \eqref{eq:join-xi-eta} gives
\[
\dd\eta_L=f^2\dd\eta_1+s^2\dd\eta_2
+2fs\,\dd\theta\wedge(\eta_2-\eta_1).
\]
Consequently,
\begin{equation}\label{eq:join-defect}
\dd\eta_L-2\Phi_L=f^2B_1+s^2B_2.
\end{equation}
There is no cancellation between the two terms: their restrictions to $TM_1$ and $TM_2$ recover $f^2B_1$ and $s^2B_2$, respectively. In particular, if the contact metric equation fails at every point
of one factor, then it also fails at every point of the open join. This verifies strictness directly in the almost contact metric notation, independently of closedness of the cone form.


\medskip

We now apply the open spherical join construction to the
$5$-dimensional strict quasi-contact structures obtained above and
to the standard Sasakian spheres. This produces explicit strict
quasi-contact metric structures in every odd dimension greater than
$5$.
Take as the first factor one of the structures $(M^5,\varphi_\mu,\xi_\mu,\eta_\mu,g_\mu)$ of Theorem~\ref{thm:main}, with $\mu>0$, and as the second factor the unit sphere $S^{2q-1}\subset\C^q$, $q\geq1$, with its standard cone-induced structure $(\varphi_S,\xi_S,\eta_S,g_S)$. With the standard complex structure $I$ and outward unit normal $N$, our conventions are
\[
\xi_S=IN,\qquad \eta_S=g_S(\xi_S,\cdot),\qquad
\varphi_SX=-IX-\eta_S(X)N.
\]
Its cone is flat K\"ahler, with complex structure $-I$, and $h_S=0$. For $q=1$, the sphere is $S^1$, $\varphi_S=0$ and its horizontal bundle is zero; the cone is still the punctured complex line and all the formulas remain valid.

\begin{corollary}\label{cor:all-dimensions}
For every $q\geq1$ and $\mu>0$, the manifold
\[
L_{\mu,q}=M^5\times S^{2q-1}\times(0,\pi/2)
\]
admits a strict quasi-contact metric structure of dimension $5+2q$ with
\begin{align}
 g_L&=\dd\theta^2+\cos^2\theta\,g_\mu+\sin^2\theta\,g_S,
 \label{eq:all-dim-g}\\
 \eta_L&=\cos^2\theta\,\eta_\mu+\sin^2\theta\,\eta_S,
 &\xi_L&=\xi_\mu+\xi_S,
 \label{eq:all-dim-eta}
\end{align}
and $\varphi_L$ given by \eqref{eq:join-phi}. Its characteristic vector field is Killing, its metric is geodesically incomplete, and $h_L$ has rank $4$ everywhere. 
\end{corollary}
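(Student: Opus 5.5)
The corollary follows by applying Theorem~\ref{thm:open-join} with first factor $(M^5,\varphi_\mu,\xi_\mu,\eta_\mu,g_\mu)$, $\mu>0$, and second factor the standard Sasakian sphere $S^{2q-1}$. It remains to check that both factors satisfy the hypotheses of the theorem and then to read off the listed properties.

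We begin with the hypotheses. By Theorem~\ref{thm:main}, the first factor is quasi-contact metric, is not contact metric for $\mu>0$, and has Killing characteristic field $\xi_\mu$. The sphere factor is Sasakian, hence contact metric and K-contact, so $\xi_S$ is Killing. For this we use that its cone is the flat Kähler $\C^q\setminus\{0\}$ with complex structure $-I$, together with Lemma~\ref{lem:homogeneous-cone-link}. It also has $h_S=0$. For $q=1$ the sphere is $S^1$ with $\varphi_S=0$ and cone $\C^*$, which is Kähler. Lemma~\ref{lem:homogeneous-cone-link} still applies, and $\xi_S=\partial_u$ is Killing, so this case needs no separate treatment.

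Next we read off the conclusions. Theorem~\ref{thm:open-join} gives an almost contact metric structure on $L_{\mu,q}$, with the tensors displayed in \eqref{eq:all-dim-g}--\eqref{eq:all-dim-eta}. This is just \eqref{eq:join-metric}--\eqref{eq:join-xi-eta} with $f=\cos\theta$ and $s=\sin\theta$. The dimension is $5+(2q-1)+1=5+2q$. The theorem also gives the following.
\begin{enumerate}[label=\textup{(\alph*)},leftmargin=2.6em]
\item The structure is quasi-contact metric, since both factors are.
\item It is not contact metric, since the first factor is not. For a pointwise statement we use \eqref{eq:join-defect}: $\dd\eta_L-2\Phi_L$ restricts on $TM^5$ to $\cos^2\theta\,B_1$. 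By \eqref{eq:family-dforms}, $B_1=2\mu\beta$, and $\beta=c\widehat\omega_1-s\widehat\omega_3$ is nowhere zero. Since $\cos\theta>0$ on $(0,\pi/2)$, the defect vanishes nowhere.
\item $\xi_L$ is Killing, since both $\xi_i$ are.
\item $g_L$ is geodesically incomplete.
\end{enumerate}

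Finally, for the rank, \eqref{eq:join-h} gives $h_L(X_1+X_2+u\partial_\theta)=h_\mu X_1$, since $h_S=0$. By \eqref{eq:main-h}, $h_\mu=\mu\varphi_2$, which restricts to an isomorphism of the $4$-dimensional bundle $\ker\eta_\mu$ and kills $\xi_\mu$. The image of $h_L$ is therefore exactly the pulled-back $\D_1$. It has rank $4$ at every point, consistent with Proposition~\ref{prop:quaternionic-blocks}.

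The only point that needs care is how $\xi_\mu$ sits inside $TL$. We must confirm that the $\xi_\mu$-component of $X_1$ contributes nothing to $h_L$; it does not, because $h_\mu\xi_\mu=0$. The rest is direct substitution into Theorem~\ref{thm:open-join}.
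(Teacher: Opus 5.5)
Your proposal is correct and follows the paper's proof. You apply Theorem~\ref{thm:open-join}, combine \eqref{eq:join-defect} with \eqref{eq:family-dforms} for pointwise strictness, and use \eqref{eq:join-h} with $h_S=0$ and $h_\mu$ invertible on $\D_\mu$ to get rank $4$. Your extra checks of the sphere factor's hypotheses and of the case $q=1$ only spell out what the paper states in the paragraph preceding the corollary.
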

\begin{proof}
Apply Theorem~\ref{thm:open-join}. Equation \eqref{eq:join-defect} becomes
\[
\dd\eta_L-2\Phi_L
=\cos^2\theta\,(\dd\eta_\mu-2\Phi_\mu),
\]
which is nonzero at every point by \eqref{eq:family-dforms}. On the splitting \eqref{eq:join-horizontal}, equation \eqref{eq:join-h} says that $h_L$ equals $h_\mu$ on $\D_\mu$ and vanishes on $\D_S\oplus\Span\{\partial_\theta,Z\}$. Since $h_\mu^2=-\mu^2\Id_{\D_\mu}$, its rank is $4$.
\end{proof}

The characteristic Jacobi operator can also be computed explicitly. Let $\Pi_\mu$ denote the $g_L$-orthogonal projection onto the lifted subbundle $\D_\mu$. Then
\begin{equation}\label{eq:join-Jacobi}
h_L^2=-\mu^2\Pi_\mu,\qquad
R^L(X,\xi_L)\xi_L
=X-\eta_L(X)\xi_L+\mu^2\Pi_\mu X.
\end{equation}
The second equality follows from Theorem~\ref{thm:curvature-intro}. On $\ker\eta_L$, this operator has eigenvalues
$1+\mu^2$  with multiplicity $4$ and
$1$ with multiplicity $2q$.
In particular,
\[
\Ric_{g_L}(\xi_L,\xi_L)=4+2q+4\mu^2.
\]
Because $q\geq1$ and $\mu>0$, the Jacobi operator is not a scalar multiple of the identity on the horizontal bundle.

\medskip

Notice that the excluded endpoint $\theta=0$ is an artifact of using polar coordinates in the flat factor and can be filled smoothly. This gives a convenient alternative realization of the examples, including simply connected ones when $q=1$.

\begin{proposition}\label{prop:ball-model}
Let $B^{2q}=\{z\in\C^q:|z|<1\}$ and put $a(z)=1-|z|^2$. On $M^5\times B^{2q}$, define
\begin{align}
\widehat g&=a(z)g_\mu+|\dd z|^2+
\frac{\langle z,\dd z\rangle^2}{a(z)},
\label{eq:ball-metric}\\
\widehat\eta(X,V)&=a(z)\eta_\mu(X)+\langle Iz,V\rangle,
&\widehat\xi&=(\xi_\mu,Iz),
\label{eq:ball-eta}\\
\widehat\varphi(X,V)&=\left(
\varphi_\mu X+\frac{\langle z,V\rangle}{a(z)}\xi_\mu,
\ -IV-\widehat\eta(X,V)z\right).
\label{eq:ball-phi}
\end{align}
Here $X\in T_pM$, $V\in T_z\C^q$, $\langle\cdot,\cdot\rangle$ is the Euclidean inner product, and the square of a $1$-form denotes its tensor square. These are smooth strict quasi-contact metric structures with Killing characteristic vector field. They extend the structures of Corollary~\ref{cor:all-dimensions} across $z=0$ and remain incomplete.
\end{proposition}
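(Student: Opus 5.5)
The proof will realize the ball model as the link of the product cone $C(M^5)\times\C^q$, written in coordinates that are smooth across $r_2=0$. On $M^5\times B^{2q}$ we take the map
\[
(p,z)\longmapsto \bigl(p,\ r_1=\sqrt{a(z)}\,\bigr)\times z\in C(M^5)\times\C^q .
\]
It is a diffeomorphism onto the unit sphere $\{r_1^2+|z|^2=1,\ r_1>0\}$ of the product cone, and since $a>0$ on the ball it is smooth. On the open set $z\neq0$, writing $z=\sin\theta\,w$ with $w\in S^{2q-1}$ and $r_1=\cos\theta$, it coincides with the identification $M^5\times S^{2q-1}\times(0,\pi/2)$ used in Theorem~\ref{thm:open-join}.

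First I compute the induced metric. The product cone metric $\dd r_1^2+r_1^2g_\mu+|\dd z|^2$ is pulled back with $r_1^2=a$. Since $\dd r_1=-\langle z,\dd z\rangle/\sqrt a$, this gives exactly \eqref{eq:ball-metric}.

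Next I apply Lemma~\ref{lem:homogeneous-cone-link} to the dilation-invariant product structure $J=J_1\oplus(-I)$. Here $J_1$ is the cone structure of $g_\mu$, and $-I$ is the flat K\"ahler structure with the paper's sign convention. The Euler field is $E=E_1+E_2$ with $E_2=z$. This gives $\xi=-JE=\xi_\mu+Iz$, which is smooth and matches \eqref{eq:ball-eta}. The form $\widehat\eta$ is its metric dual, and $\widehat\varphi=JU-\widehat\eta(U)E$ is taken tangentially. In computing it I must express $E_1=r_1\partial_{r_1}$ in link coordinates, where the $\partial_{r_1}$ component is eliminated via the constraint. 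I expect the $\langle z,V\rangle/a$ term in \eqref{eq:ball-phi} to come from exactly this elimination. This bookkeeping is the main obstacle: checking signs and the $1/a$ factors so that the result matches \eqref{eq:ball-phi} precisely. An easier check is to verify \eqref{eq:ac-algebra}--\eqref{eq:ac-metric} directly, and then note that on $z\ne0$ the formulas agree with \eqref{eq:join-phi} by construction. Since both sides are smooth, they are then determined on the whole ball by density.

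Quasi-contactness then follows from Lemma~\ref{lem:homogeneous-cone-link}, because the product cone is quasi-K\"ahler by Lemma~\ref{lem:product-QK}. Strictness follows from the computation leading to \eqref{eq:join-defect}, done in these coordinates:
\[
\dd\widehat\eta-2\widehat\Phi=a(z)\,(\dd\eta_\mu-2\Phi_\mu).
\]
This is nonzero everywhere by \eqref{eq:family-dforms} and the positivity of $a$. Alternatively, the defect is nonzero on the dense set $z\ne0$, which already suffices for strictness.

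The Killing property is shown as follows. The flow of $\widehat\xi$ is the product of the flow of $\xi_\mu$ with the unitary rotation $z\mapsto e^{iu}z$. The flow of $\xi_\mu$ preserves $g_\mu$ and $\eta_\mu$, and the rotation preserves $|z|$, $|\dd z|^2$ and $\langle z,\dd z\rangle$. Hence the flow preserves $\widehat g$.

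Incompleteness comes from the radial curve $t\mapsto(p,tv)$ with $|v|=1$. Its speed is
\[
\sqrt{1+t^2/(1-t^2)}=\frac{1}{\sqrt{1-t^2}},
\]
so its length as $t\to1$ is $\arcsin 1=\pi/2$, which is finite, while the curve leaves every compact set. It is the $\theta$-geodesic of Theorem~\ref{thm:open-join} and therefore a geodesic with no limit in the manifold.
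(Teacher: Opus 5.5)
Your strategy is the paper's. You identify $M^5\times B^{2q}$ with the graph $r_1=\sqrt{a(z)}$ in the unit sphere of $C(M)\times\C^q$, pull back the metric, obtain $\widehat\xi=-JE$ and $\widehat\eta$ from Lemma~\ref{lem:homogeneous-cone-link}, and get quasi-contactness from Lemma~\ref{lem:product-QK}. Your Killing and incompleteness arguments are also the paper's and are correct. Your defect identity $\dd\widehat\eta-2\widehat\Phi=a(z)(\dd\eta_\mu-2\Phi_\mu)$ is right as well. On $z\neq0$ it is \eqref{eq:join-defect}, where the sphere term vanishes and $\cos^2\theta=a(z)$, and it extends to $z=0$ by continuity. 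It gives strictness at every point at once, whereas the paper treats $M\times\{0\}$ separately.

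There is one gap. The step you leave open is exactly the one that ties the displayed formula \eqref{eq:ball-phi} to the cone, and your proposed shortcut does not supply it.

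\begin{itemize}
\item Verifying \eqref{eq:ac-algebra}--\eqref{eq:ac-metric} for \eqref{eq:ball-phi} is redundant once the identification is made, and insufficient without it. Many endomorphisms are compatible with the same $(\widehat\xi,\widehat\eta,\widehat g)$, and only the one induced by $J$ inherits the quasi-K\"ahler property.
\item Agreement with \eqref{eq:join-phi} on $z\neq0$ holds ``by construction'' only for the cone-induced tensor, not for the formula \eqref{eq:ball-phi}. Checking it for that formula means rewriting $\partial_\theta$, $Z$ and $\varphi_S$ in $z$-coordinates, which is more work than the direct computation.
\end{itemize}

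The direct computation is short. The vector $(X,V)$ has ambient representative $X+V-\frac{\langle z,V\rangle}{\sqrt a}\partial_{r_1}$. Since $J_1X=\varphi_\mu X+\eta_\mu(X)E_1$ and $J_1\partial_{r_1}=-\xi_\mu/\sqrt a$ on the graph,
\[
J\Bigl(X+V-\frac{\langle z,V\rangle}{\sqrt a}\partial_{r_1}\Bigr)
=\varphi_\mu X+\frac{\langle z,V\rangle}{a}\xi_\mu-IV+\eta_\mu(X)E_1 .
\]
Now subtract $\widehat\eta(X,V)(E_1+z)$. Since $E_1$ is a multiple of $\partial_{r_1}$, and a vector tangent to the graph is determined by its $M$- and $\C^q$-components, reading these off gives \eqref{eq:ball-phi} directly. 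This holds at $z=0$ as well, so no density argument is needed.
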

\begin{proof}
Consider the product $C(M)\times\C^q$ with its product almost Hermitian structure, using $-I$ on $\C^q$. The hypersurface
\[
\Sigma=\{(p,r_1,z):r_1^2+|z|^2=1,\ r_1>0\}
\]
is identified with $M\times B^{2q}$ by $r_1=\sqrt{a(z)}$. Simultaneous dilation makes this product the cone over $\Sigma$. Since
\[
\dd r_1=-\frac{\langle z,\dd z\rangle}{\sqrt{a(z)}},
\]
restriction of the product metric gives \eqref{eq:ball-metric}. The Euler field is $E=r_1\partial_{r_1}+z$; applying $-J$ gives $\widehat\xi=(\xi_\mu,Iz)$, and its metric dual is \eqref{eq:ball-eta}.
A vector $(X,V)$ on the graph has ambient representative
$X+V-\langle z,V\rangle\partial_{r_1}/r_1$. Apply the product almost complex structure, subtract $\widehat\eta(X,V)E$, and take the $M$ and $\C^q$ components. The result is exactly \eqref{eq:ball-phi}. Lemmas~\ref{lem:homogeneous-cone-link} and \ref{lem:product-QK} show that these tensors are quasi-contact metric. They are smooth at $z=0$, where
\[
\widehat g=g_\mu+|\dd z|^2,\qquad
\widehat\varphi(X,V)=(\varphi_\mu X,-IV).
\]
Along the submanifold $M\times\{0\}$, one has
\[
(\dd\widehat\eta-2\widehat\Phi)|_{TM}
=\dd\eta_\mu-2\Phi_\mu,
\]
hence the induced structure is not contact metric there. The product flow of $\xi_\mu$ and the rotations generated by $Iz$ preserves \eqref{eq:ball-metric}, hence $\widehat\xi$ is Killing. Away from $z=0$, setting $z=\sin\theta\,u$, $u\in S^{2q-1}$, recovers the open join. Radial curves still reach $|z|=1$ in finite length, so incompleteness remains.
\end{proof}

For $M=S^5$, Proposition~\ref{prop:ball-model} gives simply connected strict examples in every dimension $5+2q$, $q\geq1$.


\medskip

We now prove that the other excluded endpoint cannot be filled by a smooth complete Riemannian extension of the same metric. 

\begin{proposition}\label{prop:join-blowup}
For the metrics in Corollary~\ref{cor:all-dimensions}, let $\sigma$ be a two-plane tangent to the $M$ factor and containing $\xi_\mu$. Then
\begin{equation}\label{eq:join-blowup}
K_{g_L}(\sigma)=1+\frac{\mu^2}{\cos^2\theta}.
\end{equation}
Thus sectional curvature is unbounded as $\theta\to\pi/2$. Neither the open join nor its ball extension can be embedded isometrically as an open subset of a complete smooth Riemannian manifold.
\end{proposition}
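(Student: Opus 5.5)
We compute the curvature of the warped-product piece $\dd\theta^2+f^2g_\mu$ of \eqref{eq:join-metric}, with $f=\cos\theta$, using the standard O'Neill/warped-product formulas. For a multiply warped product $g_L=\dd\theta^2+f^2g_\mu+s^2g_S$ and vectors $X,Y$ tangent to the $M$ factor, the Gauss equation for the fibre $\{\theta\}\times M\times\{p_2\}$ applies. This fibre is totally umbilic in the $(M,\theta)$ directions with second fundamental form $-(f'/f)\,g_L(X,Y)\partial_\theta$, and it is totally geodesic in the $S^{2q-1}$ directions, since the $s^2g_S$ summand does not depend on $M$.

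The resulting formula is
\[
K_{g_L}(X,Y)=\frac{K_{g_\mu}(X,Y)-(f')^2}{f^2}
\]
for $g_\mu$-orthonormal $X,Y$. Here $K_{g_\mu}$ is the sectional curvature of $g_\mu$ itself: the induced metric on the slice is $f^2g_\mu$, whose curvature is $K_{g_\mu}/f^2$. I would verify this directly with the Koszul formula. The nonzero Christoffel terms are $\nabla_X\partial_\theta=(f'/f)X$ for $X$ tangent to $M$, together with $\nabla_XY=\nabla^\mu_XY-ff'g_\mu(X,Y)\partial_\theta$. The mixed $M$/$S$ terms vanish because $\partial_\theta f\neq0$ only enters through $\partial_\theta$.

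Next I insert \eqref{eq:main-Jacobi}: for $X\in\D_\mu$ unit for $g_\mu$, $K_{g_\mu}(X,\xi_\mu)=1+\mu^2$. With $f=\cos\theta$ and $(f')^2=\sin^2\theta$ this gives
\[
K_{g_L}(\sigma)=\frac{1+\mu^2-\sin^2\theta}{\cos^2\theta}=1+\frac{\mu^2}{\cos^2\theta},
\]
which is \eqref{eq:join-blowup}. As a consistency check, when $\mu=0$ the round-type answer $1$ is recovered, as expected for a link of a flat-type cone piece. Checking the Gauss-equation signs is the step most prone to error, so the main obstacle is bookkeeping rather than any idea.

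For non-extendability, suppose $L$ (or its ball extension, which agrees with $L$ near $\theta=\pi/2$ since the modification is only near $z=0$, i.e.\ $\theta=0$) were an open subset of a complete smooth $(\widehat M,\widehat g)$. Fix $p_1,p_2$ and take the unit-speed geodesic $\gamma(\theta)=(p_1,p_2,\theta)$ from the proof of Theorem~\ref{thm:open-join}. By completeness, $\gamma$ extends to $\theta=\pi/2$, so $\gamma([\theta_0,\pi/2])$ is a compact subset of $\widehat M$. Smoothness gives a uniform bound on sectional curvatures over a compact neighbourhood of this segment. However, along $\gamma(\theta)$ the planes $\sigma$ above have curvature $1+\mu^2/\cos^2\theta\to\infty$ as $\theta\to\pi/2$, which is a contradiction. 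This uses $\mu>0$, and the conclusion holds for either model.
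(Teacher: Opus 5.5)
Your proposal is correct and follows essentially the same route as the paper. You obtain the warped-product identity $K_{g_L}(\sigma)=(K_{g_\mu}(\sigma)-(f')^2)/f^2$ from the same Koszul Christoffel terms (your Gauss-equation framing for the totally umbilic slice is just a repackaging), then substitute $f=\cos\theta$ and $K_{g_\mu}(\sigma)=1+\mu^2$. You rule out a complete extension by the same argument along the finite-length angular geodesic toward $\theta=\pi/2$, which also covers the ball model since it contains the open join near that end.
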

\begin{proof}
For a lifted vector field $X$ tangent to $M$, the Koszul formula applied to \eqref{eq:join-metric} gives
\[
\nabla^L_{\partial_\theta}X=\frac{f'}{f}X,
\qquad
\nabla^L_XY=\nabla^\mu_XY-ff'g_\mu(X,Y)\partial_\theta
\quad(X,Y\in TM).
\]
The curvature identity for a $2$-plane $\sigma\subset TM$, obtained by inserting these expressions into the definition of $R$, is
\[
K_{g_L}(\sigma)=\frac{K_{g_\mu}(\sigma)-(f')^2}{f^2}.
\]
Since $f=\cos\theta$ and $K_{g_\mu}(\sigma)=1+\mu^2$ by \eqref{eq:main-Jacobi}, this gives \eqref{eq:join-blowup}.
Suppose that such an isometric embedding into a complete Riemannian manifold existed. A unit-speed angular geodesic approaching $\theta=\pi/2$ is a finite-length Cauchy curve, so its image would converge to a point of the complete manifold. Smoothness would bound the norm of the ambient curvature tensor in a neighborhood of that point. The isometric embedding identifies this tensor with the curvature of $g_L$ on the open image, contradicting \eqref{eq:join-blowup}. The same argument applies to the ball extension, since the problematic endpoint is unchanged.
\end{proof}

This obstruction concerns the metrics just constructed. It does not rule out a different complete quasi-contact metric structure in a higher dimension or on the same underlying manifold. In particular, it does not answer the completeness question raised in the Introduction.

\section{Final remarks}\label{sec:error}

Theorem~A of \cite{BJPS} states that a quasi-contact metric manifold with Killing characteristic field is K-contact. Theorem~\ref{thm:main}, or just the explicit sphere structure \eqref{eq:sphere-final}, disproves this assertion. 
We now examine the proof of Theorem~A of \cite{BJPS} and identify the
step at which the argument fails. The identities established before
that step remain valid and are consistent with the examples constructed
above.


At a fixed point, \cite{BJPS} first chooses an orthonormal basis
\[
(\varepsilon_1,\ldots,\varepsilon_{2n})
=(e_1,e_1^*,\ldots,e_n,e_n^*)
\]
adapted to the skew-adjoint operator $\nabla\xi$ (here, $e_a^*$ is only the paired-frame notation used in \cite{BJPS}; it is not an adjoint of a vector). The frame satisfies
\begin{equation}\label{eq:BJPS-frame}
\nabla_{e_a}\xi=-\lambda_a e_a^*,\qquad
\nabla_{e_a^*}\xi=\lambda_a e_a.
\end{equation}
Its equations (22)--(23) correctly yield
\begin{equation}\label{eq:BJPS-valid}
g(he_a,e_a^*)=0,\qquad
h^2e_a=(1-\lambda_a^2)e_a,\qquad
h^2e_a^*=(1-\lambda_a^2)e_a^*.
\end{equation}
Write $H_{jk}=g(h\varepsilon_j,\varepsilon_k)$. The argument next chooses an orthogonal matrix $A_0$ for which $\overline H=A_0HA_0^{-1}$ is a real skew-symmetric normal form. Let $\alpha$ be the corresponding orthogonal transformation and put $\overline\varepsilon_j=\alpha\varepsilon_j$.

Equations (31)--(32) of \cite{BJPS} use both
\begin{equation}\label{eq:BJPS-two}
\overline H_{jk}=g(\overline h\,\overline\varepsilon_j,
\overline\varepsilon_k)
\qquad\text{and}\qquad
\overline h=\alpha h\alpha^{-1}.
\end{equation}
However, these two statements imply
\begin{align*}
\overline H_{jk}=g(\alpha h\alpha^{-1}(\alpha\varepsilon_j),
\alpha\varepsilon_k)=g(\alpha h\varepsilon_j,\alpha\varepsilon_k)
=g(h\varepsilon_j,\varepsilon_k)=H_{jk}.
\end{align*}
Thus \eqref{eq:BJPS-two} forces $\overline H=H$; it does not represent a new normal form unless the original matrix already had that form.
One may conjugate an endomorphism while keeping the basis fixed, or change the basis while keeping the endomorphism fixed. Conjugating the endomorphism and transporting the basis by the same orthogonal transformation leaves its matrix unchanged. This observation is independent of whether matrices are written using row or column conventions.

Consequently, the normal form for $h$ cannot be transferred back to the previously chosen $2$-planes in \eqref{eq:BJPS-frame} in the manner used to obtain equations (33)--(34) of \cite{BJPS}. Those equations would assert that $h$ preserves each such $2$-plane. This does not follow from a normal form theorem for one skew-adjoint endomorphism.


Our examples show explicitly what is not captured by the argument in
\cite{BJPS}: although $h$ is skew-adjoint, it need not preserve the
$2$-planes associated with the normal form of $\nabla\xi$.
In the family of Theorem~\ref{thm:main},
\[
\nabla\xi=-\lambda\varphi_3,\qquad h=\mu\varphi_2,
\qquad \lambda^2=1+\mu^2.
\]
Choose a horizontal unit vector $e_1$ and put
\[
e_1^*=\varphi_3e_1,\qquad
 e_2=\varphi_1e_1,\qquad e_2^*=\varphi_3e_2=\varphi_2e_1.
\]
These four vectors are orthonormal. The quaternionic relations give
\begin{equation}\label{eq:block-action}
he_1=\mu e_2^*,\qquad
he_1^*=\mu e_2,\qquad
he_2=-\mu e_1^*,\qquad
he_2^*=-\mu e_1.
\end{equation}
In the convention $H_{jk}=g(h\varepsilon_j,\varepsilon_k)$, the matrix is
\begin{equation}\label{eq:explicit-block}
H=\mu
\begin{pmatrix}
0&0&0&1\\
0&0&1&0\\
0&-1&0&0\\
-1&0&0&0
\end{pmatrix}.
\end{equation}
It has square $-\mu^2\Id$, but each coefficient $g(he_a,e_a^*)$ is zero. Rather than preserving the $2$-planes
$\Span\{e_1,e_1^*\}$ and $\Span\{e_2,e_2^*\}$, the operator $h$ interchanges them.
Here $\lambda_1=\lambda_2=\lambda$, so \eqref{eq:BJPS-valid} holds exactly:
\[
h^2=(1-\lambda^2)\Id_{\D}=-\mu^2\Id_{\D}.
\]
There is no contradiction with skew-adjointness. A real normal form for $h$ instead uses the pairs $(e_1,e_2^*)$ and $(e_1^*,e_2)$, which are not the pairs in \eqref{eq:BJPS-frame}. For $\mu>0$, this is precisely the quaternionic block described in Proposition~\ref{prop:quaternionic-blocks}.

\end{document}